\def\NN{{\mathbb N}}
\newcommand{\FDer}[1]{\stackrel{#1}{\to}}
\newcommand{\Comp}[1]{\cC (#1)}
\newcommand{\ctp}{\widehat{\otimes}}
\newcommand{\Tot}{\hbox{\rm{Tot}}}
\newcommand{\Cech}{ \check{\rm{C}}}
\newcommand{\SigmaFinite}[1]{$\Sigma$-\hbox{finite#1}}
\def\cM{{\cal M}}
\def\cC{{\cal C}}
\def\cV{{\cal V}}
\def\cK{{\cal K}}
\def\cV{{\cal V}}
\def\Tor{\hbox{{\rm{Tor}}}}
\def\Ker{\hbox{{\rm{Ker}}}}
\def\gr{\hbox{{\rm{gr}}}}
\def\Ext{{\hbox{\rm{Ext}}}}
\def\Ass{{\hbox{\rm {Ass}}}}
\def\Hom{{\hbox{\rm{Hom}}}}
\def\Im{{\hbox{\rm{Im}}}}
\def\Depth{{\hbox{\rm{depth}}}}
\def\Length{{\hbox{\rm{length}}}}
\def\Ord{{\hbox{\rm{ord}}}}
\def\cd{{\hbox{\rm{cd}}}}
\def\Max{{\hbox{\rm{Max}}}}
\newtheorem{Teo}{Theorem}[section]
\newtheorem{Lemma}[Teo]{Lemma}
\newtheorem{Prop}[Teo]{Proposition}
\newtheorem{Cor}[Teo]{Corollary}
\theoremstyle{definition}
\newtheorem{Def}[Teo]{Definition}
\newtheorem{Q}[Teo]{Question}
\newtheorem{Rem}[Teo]{Remark}
\newtheorem{Diss}[Teo]{Discussion}
\newtheorem{Notation}[Teo]{Notation}
\begin{document}
\title{Associated primes of local cohomology of flat extensions with regular fibers
and \SigmaFinite{} $D$-modules}
\author{Luis N\'u\~nez-Betancourt}
\maketitle
\abstract{
In this article, we study the following question raised by Mel Hochster:
let $(R,m,K)$ be a local ring  and $S$ be a flat extension with regular closed fiber.
Is $\cV(mS)\cap\Ass_S H^i_I(S)$
finite for every ideal $I\subset S$ and $i\in \NN?$
We prove that the 
answer is positive when $S$ is either a polynomial or a power series ring over $R$ and $\dim(R/I\cap R)\leq 1.$
In addition, we analyze when this question can be reduced to the case where $S$ is a power series ring over $R$.
An important tool for our proof is the use of \SigmaFinite{} $D$-modules, which are not necessarily finitely generated as $D$-modules, 
but whose associated primes are finite. We give examples of this class of $D$-modules and applications to local cohomology.
}
\section{Introduction}
Throughout this article $A,R$ and $S$ will always denote commutative Noetherian rings with unit. 
If $M$ is an $S$-module and $I\subset S$ is an ideal, we denote the $i$-th local
cohomology of $M$ with support in $I$ by $H^i_I (M)$.
The structure of these modules has been widely studied 
by several authors \cite{HL,LyuFMod,LyuUMC,Marley,NunezDS,O,P-S}. 
Among the results obtained is that the set of associated primes of $H^i_I(R)$ is finite for certain regular rings.
Huneke and Sharp proved this for characteristic $p>0$ \cite{H-S}. Lyubeznik showed  this finiteness property
for regular local rings of equal characteristic zero  and finitely generated regular algebras over a field of characteristic zero
\cite{LyuDMod}. We point out that this property does not necessarily hold for ring that are not regular \cite{Moty,SiSw}.
Motivated by these finiteness results,  Mel Hochster raised the following related questions:
\begin{Q}\label{Q1}
Let $(R,m,K)$ be a local ring  and $S$ be a flat extension with regular closed fiber.
Is $$
\Ass_S H^0_{mS}(H^i_I(S))=\cV(mS)\cap H^i_I(S)
$$
finite for every ideal $I\subset S$ and $i\in \NN?$
\end{Q}
\begin{Q}\label{Q2}
Let $(R,m,K)$ be a local ring  and $S$ denote either $R[x_1,\ldots, x_n]$ or $R[[x_1,\ldots, x_n]]$. Is
$$
\Ass_S H^0_{mS}(H^i_I(S))=\cV(mS)\cap H^i_I(S)
$$
finite for every ideal $I\subset S$ and $i\in \NN?$
\end{Q}

It is clear that Question \ref{Q2} is a particular case of Question \ref{Q1}. 
In Proposition \ref{Questions}, we show that under minor additional hypothesis these questions are equivalent.  
Question \ref{Q2}
has a positive answer when $R$ 
is a ring of dimension $0$ or $1$ of any characteristic \cite{Nunez}.
In her thesis \cite{Hannah}, Robbins answered  Question \ref{Q2} positively for
certain algebras of dimension smaller than or equal to $3$ in characteristic $0$. In addition, several of her results can be obtained in  characteristic $p>0$, by
working in the category $C(S,R)$ (see the discussion after Remark \ref{RemQuotientDMod}). 
In addition, a positive answer for  Question \ref{Q1} would help to that the associated primes of local cohomology modules, $H^i_I(R),$
over certain regular local rings of mixed characteristic, $R.$ For example, $$\frac{V[[x,y,z_1,\ldots,z_n]]}{(\pi-xy)V[[x,y,z_1,\ldots,z_n]]}=\left(\frac{V[[x,y]]}{(\pi - xy)V[[x,y]]}\right)[[z_1,\ldots,z_n]],$$
where $(V,\pi V,K)$ is a complete DVR of mixed characteristic.
This is, to the best of our knowledge, the simplest example of a regular local 
ring of ramified mixed characteristic that the finiteness of $\Ass_RH^i_I(R)$ is unknown.

In this manuscript, we give a partial positive answer  for Question \ref{Q1} and \ref{Q2}. Namely:

\begin{Teo}\label{MainOne}
Let $(R,m,K)$ be any local ring. Let $S$ denote either $R[x_1,\ldots, x_n]$ or $R[[x_1,\ldots, x_n]]$. Then,
$\Ass_S H^0_{mS}H^i_I(S)$
is finite for every ideal $I\subset S$ such that $\dim R/I\cap R\leq 1$ and every $i\in \NN.$ Moreover,
if $mS\subset \sqrt{I}$, $$\Ass_S H^{j_1}_{J_1}\cdots H^{j_\ell}_{J_\ell} H^i_I(S)$$
is finite for all ideals $J_1,\ldots, J_\ell\subset S$  and  integers $j_1,\ldots,j_\ell\in\NN.$
\end{Teo}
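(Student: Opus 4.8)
The plan is to reduce both assertions to a single engine: the theory of \SigmaFinite{} $D$-modules developed above, which guarantees that such a module has finitely many associated primes and that the class is closed under the functors $H^j_J(-)$. Write $D=D(S,R)$ for the ring of $R$-linear differential operators on $S$; every $H^i_I(S)$ is a $D$-module, and since each $\partial_{x_t}$ is $R$-linear the subfunctor $H^0_{mS}(-)=\Gamma_{mS}(-)$ preserves the $D$-module structure. Because $\Ass_S H^0_{mS}(N)=\Ass_S(N)\cap\cV(mS)$ for every $S$-module $N$, both claims follow once the relevant modules are exhibited as \SigmaFinite{} $D$-modules.

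The main step is the case $mS\subset\sqrt I$, which already contains the ``moreover'' statement. Here $H^i_I(S)$ is $I$-torsion and $(mS)^N\subset I$ for some $N$, so $H^i_I(S)$ is $mS$-torsion and $H^0_{mS}H^i_I(S)=H^i_I(S)$; in particular its support lies in $\cV(mS)=\Spec(S/mS)$. The fiber $S/mS$ is a polynomial or power series ring over the field $K$, hence regular, so the finiteness of associated primes of local cohomology over $S/mS$ is available \cite{H-S,LyuDMod}. I would combine this with the $m$-adic layering of $H^i_I(S)$ to present $H^i_I(S)$ as a \SigmaFinite{} $D$-module. Granting this, the engine yields at once that every iterate $H^{j_1}_{J_1}\cdots H^{j_\ell}_{J_\ell}H^i_I(S)$ is again \SigmaFinite{}, hence has finitely many associated primes.

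Next I reduce the case $\dim R/(I\cap R)\le 1$ to the one above. Put $I_0=I\cap R$. If $\dim R/I_0=0$ then $m^kS\subset I$ for some $k$, so $mS\subset\sqrt I$ and the main step applies directly. If $\dim R/I_0=1$, choose $f\in m$ that is a parameter modulo $I_0$, so that $\dim R/(I_0+fR)=0$. Set $S'=S/fS$, which is $(R/fR)[x_1,\ldots,x_n]$ or $(R/fR)[[x_1,\ldots,x_n]]$, and $I'=IS'$. Then $I'\cap(R/fR)$ has zero-dimensional quotient, so $mS'\subset\sqrt{I'}$ and the main step applies over $S'$: each $\Ass_{S'}H^{i-1}_{I'}(S')$ is finite. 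Now feed $0\to S\xrightarrow{\,f\,}S\to S'\to 0$ into $H^\bullet_I(-)$. Any $P\in\Ass_S H^i_I(S)$ with $P\supset mS$ satisfies $f\in P$, hence $P\in\Ass_S\bigl(0:_{H^i_I(S)}f\bigr)$; but $\bigl(0:_{H^i_I(S)}f\bigr)$ is the image of the connecting map $H^{i-1}_I(S/fS)\cong H^{i-1}_{I'}(S')\to H^i_I(S)$, hence a quotient of $H^{i-1}_{I'}(S')$. Therefore $\Ass_S H^i_I(S)\cap\cV(mS)\subset\Ass_S H^{i-1}_{I'}(S')$, which is finite, i.e.\ $\Ass_S H^0_{mS}H^i_I(S)$ is finite.

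The \textbf{main obstacle} is the main step: constructing the \SigmaFinite{} structure on $H^i_I(S)$ when $mS\subset\sqrt I$ and verifying that it is preserved by arbitrary $H^j_J(-)$. All the force of the dimension hypothesis is absorbed there, through the regularity of the fiber $S/mS$ and the known finiteness over it; once that is in place, the passage to $\dim R/(I\cap R)\le 1$ is the clean homological reduction above, whose only delicate point is the bookkeeping of associated primes across the $f$-Bockstein sequence.
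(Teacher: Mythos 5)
Your overall architecture matches the paper's --- reduce everything to the \SigmaFinite{} machine, whose outputs (finite $\Ass$, stability under $H^j_J(-)$) you invoke correctly --- but the proposal has two genuine gaps. The first is the step you yourself flag as ``the main obstacle,'' and it is not a technical verification to be granted: it is the entire content of the theorem. Your sketch (``combine the $mS$-torsion property with the $m$-adic layering'') does not work as stated. The layers of the filtration by $(0:_{H^i_I(S)}m^kS)$ are $S/mS$-modules, but nothing says they have finite length over $D=D(S,R)$ or that their composition factors range over a fixed finite set; indeed, in the Hartshorne-type example discussed in the paper the socle $(0:_{H^2_I(S)}mS)$ is an \emph{infinite} direct sum of copies of $S/mS$, so the layers themselves are not finite length and conditions (i) and (ii) of Definition \ref{SigmaFinite} require a real argument. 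The paper's route (Proposition \ref{SF I mS}) is: (a) Lemma \ref{Comp}, by induction on length using flatness of $S$ over $R$, shows that for every finite length $R$-module $M$ the module $H^p_J(M\otimes_R S)$ has finite length over $D$ with composition factors inside $\bigcup_j\Comp{H^j_{JS}(S/mS)}$, a finite set by Lyubeznik's characteristic-free theory on the regular fiber; (b) with $\underline{f}$ a system of parameters of $R$ and $\underline{g}$ generators of $I$, the double complexes $T_t=\cK(\underline{f}^t;R)\otimes_R\Cech(\underline{g};S)$ have spectral sequences with $E_2$-terms $H^p_I(H^q(\cK(\underline{f}^t;R))\otimes_R S)$, which are finite length with uniformly controlled factors by (a), so each $H^i(\Tot(T_t))$ is finite length with factors in a fixed finite set; (c) $H^i_I(S)=\lim_{\to t}H^i(\Tot(T_t))$ is then \SigmaFinite{} by Proposition \ref{SF-DirectSystem}. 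Without an argument of this kind the proposal is a plan, not a proof; the ``moreover'' clause, which you correctly derive from Corollary \ref{SF-Loc Coh} and Proposition \ref{SF-SES}, also rests entirely on this missing step.

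Second, your reduction of the case $\dim R/(I\cap R)\le 1$ contains a false inference. From ``$(0:_{H^i_I(S)}f)$ is the image of the connecting map, hence a quotient of $H^{i-1}_{I'}(S')$'' you conclude $\Ass_S(0:_{H^i_I(S)}f)\subset\Ass_S H^{i-1}_{I'}(S')$. Associated primes of a quotient are not contained in those of the module ($\ZZ/p\ZZ$ is a quotient of $\ZZ$), so finiteness of $\Ass_{S'}H^{i-1}_{I'}(S')$ --- which is all you extracted from the main step --- gives nothing here. The fix is to use the stronger output of the main step: $H^{i-1}_{I'}(S')$ is \SigmaFinite{} (viewed as a $D(S,R)$-module via the surjection $D(S,R)\to D(S/fS,R/fR)$ of Remark \ref{RemQuotientDMod}), the connecting map is $D(S,R)$-linear, and \SigmaFinite{ness} passes to quotients by Proposition \ref{SF-SES}, so the image has finite $\Ass$. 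This is exactly how the paper argues, except that it never leaves $S$: Proposition \ref{AssDimOne} uses the localization sequence $H^i_{(I,f)S}(S)\FDer{\alpha_i}H^i_I(S)\FDer{\beta_i}H^i_I(S_f)$, notes that $\Im(\alpha_i)$ is \SigmaFinite{} because it is a $D$-module quotient of $H^i_{(I,f)S}(S)$ (which is \SigmaFinite{} since $mS\subset\sqrt{(I,f)S}$), and that no associated prime of $\Im(\beta_i)\subset H^i_I(S_f)$ can contain $f\in m$. That version is cleaner than your Bockstein sequence because it needs no comparison of $D$-module structures across $S\to S/fS$; your reduction is salvageable, but only after it is rewritten to carry \SigmaFinite{ness}, not just finiteness of associated primes, across the connecting map.
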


\begin{Teo}\label{MainTwo}
Let $(R,m,K)\to (S,\eta,L)$ be a flat extension of local rings with regular closed fiber such that $R$ contains a field.
Let $I\subset S$ be an ideal such that $\dim R/I\cap R\leq 1$. 
Suppose that the morphism induced in the completions $\widehat{R}\to\widehat{S}$ maps a coefficient field of $R$ into a coefficient field of $S$.
Then,
$$
\Ass_S H^0_m H^i_I(S)
$$
is finite for every $i\in \NN.$  
\end{Teo}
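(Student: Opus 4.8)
The plan is to reduce Theorem~\ref{MainTwo} to the power series case already settled in Theorem~\ref{MainOne}, by passing to completions and then exhibiting $\widehat{S}$ as a power series ring over a complete local ring that is flat over $\widehat{R}$ with zero-dimensional closed fiber.

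First I would reduce to the complete case. Since $S\to\widehat{S}$ is faithfully flat and local cohomology commutes with flat base change (applied twice), one has
$$H^0_{mS}H^i_{I\widehat{S}}(\widehat{S})\cong\bigl(H^0_{mS}H^i_I(S)\bigr)\otimes_S\widehat{S}.$$
For a flat local homomorphism $S\to\widehat{S}$ and an \emph{arbitrary} $S$-module $N$ one has $\Ass_{\widehat{S}}(N\otimes_S\widehat{S})=\bigcup_{P\in\Ass_S N}\Ass_{\widehat{S}}(\widehat{S}/P\widehat{S})$, and each $\widehat{S}/P\widehat{S}$ is Noetherian; hence $\Ass_S N$ is finite if and only if $\Ass_{\widehat{S}}(N\otimes_S\widehat{S})$ is finite. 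Thus it suffices to prove finiteness of $\Ass_{\widehat{S}}H^0_{mS}H^i_{I\widehat{S}}(\widehat{S})$, and I may replace $R\to S$ by $\widehat{R}\to\widehat{S}$, which is again flat local with regular closed fiber and, by hypothesis, carries a coefficient field of $\widehat{R}$ into one of $\widehat{S}$.

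The heart of the argument is a structural presentation of $\widehat{S}$. Let $K\subseteq L$ be the chosen coefficient fields of $\widehat{R}$ and $\widehat{S}$, with $K$ mapping into $L$, and let $n=\dim\widehat{S}/m\widehat{S}$, so Cohen's theorem gives $\widehat{S}/m\widehat{S}\cong L[[x_1,\dots,x_n]]$. Set $R'=\widehat{R}\,\ctp_K L$; writing $\widehat{R}=K[[t_1,\dots,t_d]]/J$ shows $R'=L[[t_1,\dots,t_d]]/JL[[t_1,\dots,t_d]]$, a complete Noetherian local ring, flat over $\widehat{R}$, with residue field $L$ and maximal ideal $m_{R'}=m R'$, so $m_{R'}\widehat{S}=m\widehat{S}$. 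The coefficient-field compatibility yields a local $\widehat{R}$-algebra map $R'\to\widehat{S}$; lifting $x_1,\dots,x_n$ to elements of the maximal ideal of $\widehat{S}$ extends it to $\Phi\colon R'[[T_1,\dots,T_n]]\to\widehat{S}$. Both source and target are flat over $\widehat{R}$ and $m$-adically complete, and the induced map on closed fibers $L[[T_1,\dots,T_n]]\to L[[x_1,\dots,x_n]]$ is an $L$-algebra isomorphism. Comparing associated graded rings with respect to $m$, flatness gives $\gr_m(\Phi)=\overline{\Phi}\otimes_K\gr_m(\widehat{R})$, which is an isomorphism; by completeness $\Phi$ is an isomorphism, identifying $\widehat{S}\cong R'[[x_1,\dots,x_n]]$.

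Finally I would apply Theorem~\ref{MainOne} to the power series ring $\widehat{S}=R'[[x_1,\dots,x_n]]$ over $R'$ and the ideal $I\widehat{S}$. Since $\widehat{R}\to R'$ is flat with zero-dimensional closed fiber and $(I\cap R)R'\subseteq I\widehat{S}\cap R'$, we get $\dim R'/(I\widehat{S}\cap R')\le\dim R'/(I\cap R)R'=\dim R/(I\cap R)\le 1$, so the hypotheses of Theorem~\ref{MainOne} hold. As $m_{R'}\widehat{S}=m\widehat{S}$, it yields finiteness of $\Ass_{\widehat{S}}H^0_{mS}H^i_{I\widehat{S}}(\widehat{S})$, and the reduction of the first paragraph transports this back to $\Ass_S H^0_{m}H^i_I(S)$. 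I expect the main obstacle to be the structure theorem of the third paragraph: verifying that $R'$ is Noetherian and flat over $\widehat{R}$ and, above all, that the coefficient-field hypothesis is exactly what makes the $\widehat{R}$-algebra map $R'\to\widehat{S}$ available and forces $\Phi$ to be an isomorphism on closed fibers. Without that compatibility the copy of $L$ inside $\widehat{S}$ need not align with the image of $K$, and the presentation $\widehat{S}\cong R'[[x_1,\dots,x_n]]$ can fail.
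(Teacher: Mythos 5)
Your proposal is correct and takes essentially the same route as the paper: reduce to the completions, exhibit $\widehat{S}$ as a power series ring over $A = L\ctp_K \widehat{R}$ (your $R'$; this is exactly the content of Discussion \ref{Discussion}), and then invoke the power series case with $\dim R/(I\cap R)\leq 1$ (Proposition \ref{AssDimOne}, i.e.\ the first assertion of Theorem \ref{MainOne}). The only difference is cosmetic: you verify that $\Phi$ is an isomorphism by an associated graded argument, whereas the paper uses the local criterion of flatness together with the induced isomorphism on closed fibers.
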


In Theorem \ref{MainTwo}, 
the hypothesis that $ \widehat{\varphi}$ maps a coefficient field of $\widehat{R}$ to a coefficient field of $\widehat{S}$ is not very restrictive.
For instance, it is
satisfied when $L$ is a separable extension of $K$ \cite[Theorem $28.3$]{Mat}. In particular, this holds when
$K$ is a field of characteristic $0$ or a perfect field of characteristic $p>0.$

A key part of the proof of Theorem \ref{MainOne} is the use of \SigmaFinite{ } $D$-modules, which are directed unions of finite 
length $D$-modules that satisfy certain condition (see Definition \ref{SigmaFinite}). One of the main properties that a \SigmaFinite{ }
$D$-module satisfies is that its set of associated primes is finite. In addition, the local cohomology of a \SigmaFinite{ } $D$-module 
is again \SigmaFinite{.} Proving that the local cohomology modules supported on $H^i_{mS}H^j_I(S)$ would answer Question \ref{Q1}.

This manuscript is organized as follows. In section \ref{Pre}, we recall some  definitions and properties of local cohomology and $D$-modules. 
Later. in Section \ref{SigmaSection}, we define \SigmaFinite{} $D$-modules and give their first properties.
In Section \ref{SecAss}, we prove that certain local cohomology modules are \SigmaFinite{;} as a consequence, we give a proof of Theorem \ref{MainOne}.
Later, in Section \ref{QSF}, we give several examples of \SigmaFinite{} $D$-modules. 
Finally, in Section \ref{SecQ}, we prove that under the certain hypothesis Question \ref{Q1} and \ref{Q2} are equivalent.  
In addition, we prove Theorem \ref{MainTwo}.
\section{Preliminaries}\label{Pre}
\subsection{Local cohomology}

Let $R$ be a ring, $I\subset R$ an ideal, and  $M$ an $R$-module.
If $I$ is generated by $f_1,\ldots, f_\ell \in R$, the local cohomology
group, $H^i_I(M)$, can be computed using the $\check{\mbox{C}}$ech complex, $\Cech(\underline{f};M)$,

$$
0\to M\to \oplus_j M_{f_j}\to \ldots \to M_{f_1 \cdots f_\ell} \to 0.
$$

Let $\cK(f_1,\ldots, f_s;M)$ denote the Koszul complex associated to the sequence $\underline{f}=f_1,\ldots, f_\ell$. 
In Figure \ref{LimitKoszul} there is a direct limit involving $\cK(f^t_i;M)$, whose limit is $\Cech(f_i;M).$

\begin{figure}[h]\label{LimitKoszul}
$$
\hskip -15mm
\begin{array}{*{18}{c@{\,}}}
M & \to & M & \to & M & \to & M & \to & M & \to & \ldots \\
\downarrow  & & f_i \downarrow  & & f^2_i\downarrow & & f^3_i\downarrow  & & f^4_i \downarrow \\
M & \FDer{f_i} & M & \FDer{f_i} & M & \FDer{f_i} & M & \FDer{f_i} & M & \FDer{f_i} &\ldots \\
\end{array}
$$ 
\caption{Direct limit of Koszul complexes}
\end{figure}

Let $\underline{f}^t$ denote the sequence $f^t_1,\ldots,f^t_s$.
Since 
$$
\cK(\underline{f};M)=\cK(f_1;M)\otimes_R \ldots \otimes_R \cK(f_\ell;M),
$$
 we have that
\begin{align*}
\Cech(\underline{f};M)&=\Cech(f_1;M)\otimes_S \ldots \otimes_S \Cech(f_s;M)\\
&=\lim\limits_{\to t} \cK(f^t_1;M)\otimes_S \ldots \otimes_S \lim\limits_\to \cK(f^t_s;M)\\
&=\lim\limits_{\to t} \cK(f^t_1;M)\otimes_S \ldots \otimes_S \cK(f^t_s;M).\\
\end{align*}
We define the \emph{cohomological dimension of $I$} by
$$
\cd_R{I}=\Max\{i\mid H^i_(R)\neq 0\}.
$$
\subsection{D-modules}

Given two commutative rings $R$ and $S$ such that $R\subset S$,  we define the \emph{ring of $R$-linear differential operators of $S$,} $D(S,R)$, as 
the subring of $\Hom_R(S,S)$ obtained inductively as follows.  
The differential operators of order zero are morphisms induced by multiplication by elements in $S$ ($\Hom_S(S,S)=S$).   
An element $\theta \in \Hom_R(S,S)$ is a differential operator of 
order less than or equal to $k+1$ if $\theta\cdot r -r\cdot\theta$
is a differential operator of order less than or equal to $k$ for every $r\in S$. 

We recall that if $M$ is a $D(S,R)$-module, then $M_f$ has the structure of a $D(S,R)$-module  such that, for every $f \in S$, the natural
morphism $M\to M_f$ is a morphism of $D(S,R)$-modules. 
As a consequence,  $H^{i_1}_{I_1}\cdots H^{i_\ell}_{I_\ell}(S)$
is also a $D(S,R)$-module \cite[Examples $2.1$]{LyuDMod}.

\begin{Rem}\label{RemQuotientDMod}
If $(R,m,K)$ is a local ring and $S$ is either $R[x_1,\ldots,x_n]$ or $R[[x_1,\ldots,x_n]]$, 
then $$D(S,R)=R\left[\frac{1}{t!} \frac{\partial^t }{\partial x_i ^t} \ | \ t \in \NN, 1 \leq i \leq n \right]\subset \Hom_R(S,S)$$
\cite[Theorem $16.12.1$]{EGA}. 
Then, there is a natural surjection $\rho :D(S,R)\to D(S/IS,R/IR)$ for every ideal $I\subset R$. Moreover,
\begin{itemize}
\item[(i)] If $M$ is a $D(S,R)$-module, then $IM$ is a $D(S,R)$-submodule and the structure of $M/IM$ as a $D(S,R)$-module
is given by $\rho$, i.e., $\delta \cdot v=\rho(\delta) \cdot v$ for all $\delta\in D(S,R)$ and $v\in M/IM$.\\
\item[(ii)] If $ R$ contains the rational numbers $D(S,R)$ is a Notherian ring. Let $\Gamma_i=\{\delta\in D(S,R)\mid \Ord(\delta)\leq i\}.$ We have that
$\gr^{\Gamma} D=S[y_1,\ldots,y_n],$ which is Notherian and then so $D$ is.
\end{itemize}
\end{Rem}

We recall a subcategory of $D(S,R)$-modules introduced by Lyubeznik \cite{LyuFreeChar}. We denote by $C(S,R)$
the smallest subcategory of $D(S,R)$-modules that
contains $S_f$ for all $f\in S$ and that is closed under subobjects, 
extensions and quotients. In particular, the kernel, image and cokernel of a morphism of $D(S,R)$-modules
that belongs to $C(S,R)$ are also objects in $C(S,R)$. We note that if $M$ is an object in $C(S,R)$,
then $H^{i_1}_{I_1}\cdots H^{i_\ell}_{I_\ell}(M)$ is also an object in this subcategory; in particular, 
$H^{i_1}_{I_1}\cdots H^{i_\ell}_{I_\ell}(S)$ belongs to $C(S,R)$ 
 \cite[Lemma $5$]{LyuFreeChar}.\\

A $D(S,R)$-module, $M$, is \emph{simple} if its only $D(S,R)$-submodules are $0$ and $M$.
We say that a $D(S,R)$-module, $M$, has \emph{finite length} if there is a strictly ascending chain  of $D(S,R)$-modules, 
$0\subset M_0 \subset M_1\subset \ldots \subset M_h =M,$
called \emph{a composition series},
such that $M_{i+1}/M_i$ is a nonzero simple $D(S,R)$-module for every $i=0,\ldots, h$. In this case, $h$ is independent of the filtration and it is called the 
\emph{length} of $M$. Moreover,  the \emph{composition factors}, $M_{i+1}/M_i,$  are the same, up to permutation and isomorphism, for every filtration.\\

\begin{Notation}
If $M$ is a $D(S,R)$-module of finite length, we denote the set of its composition factors by
$\Comp{M}$.
\end{Notation}

\begin{Rem}\label{RemFinLen}
\begin{itemize}
\item[(i)] If $M$ is a nonzero simple $D(S,R)$-module, then $M$ has only one associated prime. This is because $H^0_P(M)$ is a $D(S,R)$-submodule of $M$ for every prime ideal 
$P\subset S$. As a consequence, if $M$ is a $D(S,R)$-module of finite length, then
$\Ass_S M\subset \bigcup_{N\in\Comp{M}} \Ass_S N,$ which is finite. 
\item[(ii)] If $0\to M'\to M\to M''\to 0$  is a short exact sequence of $D(S,R)$-modules of finite length, then
$\Comp{M}=\Comp{M'}\bigcup \Comp{M''}$.
\end{itemize}
\end{Rem}

\section{\SigmaFinite{} $D$-modules}\label{SigmaSection}
\begin{Notation}
Thorough this section $(R,m,K)$ denotes a local ring and $S$ denotes either $R[x_1,\ldots,x_n]$ or $R[[x_1,\ldots,x_n]]$. In addition, $D$
denotes $D(S,R).$
\end{Notation}

\begin{Def}\label{SigmaFinite}
Let $M$ be a $D$-module supported at $mS$ and $\cM$ be the set of all $D$-submodules of $M$ that have finite length.
We say that  $M$ is \emph{\SigmaFinite}  if: 
\begin{itemize}
\item[(i)] $\bigcup_{N\in\cM} N=M,$
\item[(ii)] $\bigcup_{N\in\cM}\Comp{N}$ is finite, and
\item[(iii)] For every $N\in \Comp{M}$ and $L\in \Comp{N}$, $L\in C(S/mS,R/mR)$.
\end{itemize}
We denote the set of \emph{composition factors of $M$}, $\bigcup_{N\in\cM}\Comp{N}$, by $\Comp{M}$.
\end{Def}

\begin{Rem}\label{SF-AssPrimes}
We have that
$$\Ass_S M\subset \bigcup_{N\in\Comp{M}} \Ass_S M$$
for every   \SigmaFinite{}  
$D$-module, $M$. In particular, $\Ass_S M$ is finite.
\end{Rem}
\begin{Lemma}\label{FinLenFinGen}
Let 
$M$ be a \SigmaFinite{} $D$-module and $N$ be a $D$-submodule of $S$. 
Then, $N$ has finite length as $D$-module if and only if $N$ is a finitely generated as $D$-module.
\end{Lemma}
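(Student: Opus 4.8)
The plan is to prove the two implications separately, noting that only the direction \emph{finitely generated $\Rightarrow$ finite length} uses that $M$ is \SigmaFinite{}. For the direction \emph{finite length $\Rightarrow$ finitely generated}, which holds for $D$-modules over any ring, I would fix a composition series $0=N_0\subset N_1\subset\cdots\subset N_h=N$ with simple quotients. Each simple $D$-module is cyclic, being generated by any of its nonzero elements, and finite generation is stable under extensions of $D$-modules; hence an induction on $h$ along the short exact sequences $0\to N_i\to N_{i+1}\to N_{i+1}/N_i\to 0$ shows that $N$ is finitely generated.

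For the converse, I would use the directed union structure supplied by Definition \ref{SigmaFinite}(i). Let $\cM$ denote the set of finite length $D$-submodules of $M$, so that $M=\bigcup_{N'\in\cM}N'$. First I would verify that $\cM$ is directed under inclusion: for $N_1,N_2\in\cM$ the second isomorphism theorem gives a short exact sequence $0\to N_1\to N_1+N_2\to N_2/(N_1\cap N_2)\to 0$ whose outer terms have finite length, so $N_1+N_2\in\cM$ as well. Now if $N$ is generated as a $D$-module by $v_1,\ldots,v_k$, then each $v_j$ lies in some member of $\cM$, and by directedness there is a single $N'\in\cM$ containing all of them. Since $N'$ is a $D$-submodule it contains $Dv_1+\cdots+Dv_k=N$, exhibiting $N$ as a $D$-submodule of the finite length module $N'$.

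It then remains to invoke the standard fact that a subobject of a finite length object in an abelian category again has finite length; equivalently, $N'$ is both Noetherian and Artinian as a $D$-module and these properties pass to $N$. Applying this to $N\subseteq N'$ finishes the proof. I expect the only delicate point to be the reduction step in the second paragraph: absorbing finitely many generators into a single finite length submodule is precisely where the hypothesis is used, since without it a finitely generated $D$-module need not have finite length (indeed $D$ itself need not be Noetherian when $R$ does not contain $\QQ$, by Remark \ref{RemQuotientDMod}). Once directedness of $\cM$ and the union $M=\bigcup_{N'\in\cM}N'$ are in hand, the rest is Jordan--H\"older bookkeeping.
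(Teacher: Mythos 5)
Your proof is correct and takes essentially the same approach as the paper: for the direction \emph{finitely generated $\Rightarrow$ finite length}, the paper also absorbs the generators $v_1,\ldots,v_\ell$ into finite length submodules $N_1,\ldots,N_\ell\in\cM$ and concludes $N\subset N_1+\cdots+N_\ell$, which is precisely what your directedness-of-$\cM$ argument makes explicit. The converse direction is dismissed as clear in the paper, and your composition-series induction is just the standard justification of that step.
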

\begin{proof} 
Suppose that $N$ is finitely generated. Let $v_1,\ldots,v_\ell$ be a set the generators of $N.$
Since $\bigcup_{N\in\cM} N=M$, there exists a finite length module $N_i$ that contains $v_i.$
Then, $N\subset N_1+\ldots +N_\ell$ and it has finite length.
It is clear that if $N$ has finite length then it is finitely generated.
\end{proof}
\begin{Prop}\label{SF-SES}
Let $0\to M'\to M\to M''\to 0$ be a short exact sequence of $D$-modules. If $M$ is \SigmaFinite{,}
then $M'$ and $M''$ are \SigmaFinite{.}
 Moreover, $\Comp{M}=\Comp{M'}\cup \Comp{M'}.$
\end{Prop}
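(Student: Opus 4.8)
The plan is to verify the three conditions of Definition \ref{SigmaFinite} for $M'$ and $M''$ separately, reading off the equality of composition factors along the way. Throughout I write $\pi\colon M\to M''$ for the surjection and $\cM$, $\cM'$, $\cM''$ for the sets of finite-length $D$-submodules of $M$, $M'$, $M''$ respectively. First note that since $M$ is supported at $mS$, so are its submodule $M'$ and its quotient $M''$. The submodule $M'$ is the easy case: every finite-length $D$-submodule of $M'$ is also one of $M$, so $\cM'\subseteq\cM$, whence $\bigcup_{N\in\cM'}\Comp{N}\subseteq\bigcup_{N\in\cM}\Comp{N}=\Comp{M}$ is finite, giving (ii), while (iii) is inherited verbatim. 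For (i), given $v\in M'$, condition (i) for $M$ produces $N\in\cM$ with $v\in N$; then $N\cap M'$ is a $D$-submodule of the finite-length module $N$, hence has finite length, lies in $\cM'$, and contains $v$. Thus $\bigcup_{N\in\cM'}N=M'$.

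For $M''$, condition (i) is handled dually: given $w\in M''$, lift it to $v\in M$, choose $N\in\cM$ containing $v$, and observe that $\pi(N)\cong N/(N\cap M')$ is a finite-length $D$-submodule of $M''$ containing $w=\pi(v)$; hence $\bigcup_{P\in\cM''}P=M''$.

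The crux is condition (ii) for $M''$, since a $D$-submodule of the quotient $M''$ need not lift to a submodule of $M$. To get around this I would use that a finite-length module is finitely generated (Lemma \ref{FinLenFinGen}, or directly from a composition series). Given $P\in\cM''$, pick generators $w_1,\dots,w_k$ of $P$, lift each to $v_i\in M$, choose $N_i\in\cM$ with $v_i\in N_i$, and set $N=N_1+\cdots+N_k$, which again has finite length. Then $\pi(N)$ is a finite-length $D$-submodule of $M''$ containing every $w_i$, so $P\subseteq\pi(N)$. Applying Remark \ref{RemFinLen}(ii) to $0\to N\cap M'\to N\to\pi(N)\to 0$ gives $\Comp{\pi(N)}\subseteq\Comp{N}$, and to $0\to P\to\pi(N)\to\pi(N)/P\to 0$ gives $\Comp{P}\subseteq\Comp{\pi(N)}$. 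Chaining these, $\Comp{P}\subseteq\Comp{N}\subseteq\Comp{M}$, so $\bigcup_{P\in\cM''}\Comp{P}\subseteq\Comp{M}$ is finite, establishing (ii); condition (iii) for $M''$ then follows because $\Comp{M''}\subseteq\Comp{M}$.

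It remains to prove $\Comp{M}=\Comp{M'}\cup\Comp{M''}$. The inclusion $\supseteq$ is already recorded, since $\Comp{M'}\subseteq\Comp{M}$ and $\Comp{M''}\subseteq\Comp{M}$ by the arguments above. For $\subseteq$, take any $N\in\cM$ and apply Remark \ref{RemFinLen}(ii) to $0\to N\cap M'\to N\to\pi(N)\to 0$ to get $\Comp{N}=\Comp{N\cap M'}\cup\Comp{\pi(N)}$; here $N\cap M'\in\cM'$ and $\pi(N)\in\cM''$, so $\Comp{N}\subseteq\Comp{M'}\cup\Comp{M''}$, and taking the union over $N\in\cM$ finishes the proof. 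The only delicate point is the finite-generation reduction in condition (ii) for $M''$; everything else is routine Jordan--H\"older bookkeeping drawn from Remark \ref{RemFinLen}.
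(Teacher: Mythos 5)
Your proof is correct and takes essentially the same approach as the paper's: $M'$ is handled by intersecting finite-length submodules of $M$ with $M'$, and $M''$ by lifting finitely many generators of a finite-length submodule of $M''$ to a finite-length submodule of $M$ --- where the paper cites Lemma \ref{FinLenFinGen}, you simply inline its proof via the sum $N_1+\cdots+N_k$. Your explicit Jordan--H\"older bookkeeping for $\Comp{M}=\Comp{M'}\cup\Comp{M''}$ just fills in details the paper leaves to Remark \ref{RemFinLen}.
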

\begin{proof}
We first assume that $M$ is \SigmaFinite{.} 
We have that $$M'=\bigcup_{N\in\cM} N\cap M'=\bigcup_{N'\in\cM} N'$$ and then $M$ is \SigmaFinite{} by Remark \ref{RemFinLen}.
Let $\rho $ denote the morphism $M\to M''$ and $N''\in\cM''$ and $\ell=\Length_{D} N''.$ There are $v_1,\dots,v_\ell\in N''$ such that
$N''=D\cdot v_1+\ldots +D\cdot v_\ell.$ Let $w_j$ be a preimage of $v_j$ and $N$ be the
$D$-module generated by  $w_1,\ldots,w_\ell.$ We have that $ N\to N''$ is a surjection, and that $N$ has finite length by Lemma \ref{FinLenFinGen}. 
Therefore, $M''=\bigcup_{N\in \cM} \rho(N)=\bigcup_{N''\in\cM''} N''$ and the result follows by Remark \ref{RemFinLen}.
\end{proof}

\begin{Prop}\label{SF-SESCharZero}
Let $0\to M'\to M\to M''\to 0$ be a short exact sequence of $D$-modules. 
Suppose that $R$ contains the rational numbers.
Then, $M$ is \SigmaFinite{} if and only if 
$M'$ and $M''$ are \SigmaFinite{.}
Moreover, $\Comp{M}=\Comp{M'}\cup \Comp{M'}.$
\end{Prop}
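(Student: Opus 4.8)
The forward implication---that $\Sigma$-finiteness of $M$ forces $M'$ and $M''$ to be \SigmaFinite{}---is exactly Proposition \ref{SF-SES} and requires no assumption on the characteristic, so the plan is to establish the converse and the equality of composition factors. Throughout, I would exploit the fact that, since $R$ contains $\QQ$, the ring $D=D(S,R)$ is Noetherian by Remark \ref{RemQuotientDMod}(ii); this is the only place the hypothesis enters, and it is precisely what is unavailable in the general setting of Proposition \ref{SF-SES}.

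Assume $M'$ and $M''$ are \SigmaFinite{} and let $\rho:M\to M''$ denote the surjection. To verify condition (i) of Definition \ref{SigmaFinite}, I would fix $x\in M$, consider the cyclic submodule $D\cdot x$, and intersect the given sequence to obtain
$$
0\to D\cdot x\cap M'\to D\cdot x\to \rho(D\cdot x)\to 0.
$$
The quotient $\rho(D\cdot x)=D\cdot\rho(x)$ is cyclic, hence finitely generated, so it has finite length by Lemma \ref{FinLenFinGen} applied to the \SigmaFinite{} module $M''$. For the submodule, Noetherianity of $D$ guarantees that $D\cdot x\cap M'$ is a finitely generated $D$-submodule of $M'$, which therefore has finite length by Lemma \ref{FinLenFinGen} applied to $M'$. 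Since length is additive on short exact sequences, $D\cdot x$ has finite length, and as $x\in D\cdot x$ this shows that the finite length submodules of $M$ cover $M$.

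For conditions (ii) and (iii) and for the inclusion $\Comp{M}\subseteq\Comp{M'}\cup\Comp{M''}$, I would take an arbitrary finite length submodule $N\subseteq M$ and intersect the sequence to get $0\to N\cap M'\to N\to \rho(N)\to 0$. By Remark \ref{RemFinLen}(ii) this yields $\Comp{N}=\Comp{N\cap M'}\cup\Comp{\rho(N)}$; since $N\cap M'$ and $\rho(N)$ are finite length submodules of $M'$ and $M''$, their composition factors lie in $\Comp{M'}$ and $\Comp{M''}$ respectively. Hence $\Comp{N}\subseteq\Comp{M'}\cup\Comp{M''}$ for every such $N$, which simultaneously establishes finiteness of $\bigcup_{N}\Comp{N}$ (condition (ii)) and the inclusion $\Comp{M}\subseteq\Comp{M'}\cup\Comp{M''}$; condition (iii) then follows because each such factor already lies in $C(S/mS,R/mR)$ by the \SigmaFinite{ness} of $M'$ and $M''$.

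For the reverse inclusion, any finite length submodule of $M'$ is also a finite length submodule of $M$, giving $\Comp{M'}\subseteq\Comp{M}$; and any finite length submodule $N''\subseteq M''$ lifts, exactly as in the proof of Proposition \ref{SF-SES} (take preimages of a generating set and invoke the finite length argument above), to a finite length submodule $N\subseteq M$ with $\rho(N)=N''$, so that $\Comp{N''}\subseteq\Comp{N}\subseteq\Comp{M}$. The main obstacle is the finite generation of $D\cdot x\cap M'$ in the verification of condition (i): this is the single step that relies on $D$ being Noetherian, and hence on the hypothesis $\QQ\subseteq R$, which explains why only the forward direction is available in Proposition \ref{SF-SES}.
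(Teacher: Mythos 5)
Your proposal is correct and follows essentially the same route as the paper: both arguments verify condition (i) via the exact sequence $0\to D\cdot v\cap M'\to D\cdot v\to D\cdot\bar v\to 0$, using Noetherianity of $D$ (Remark \ref{RemQuotientDMod}, available since $\QQ\subseteq R$) to make $D\cdot v\cap M'$ finitely generated and Lemma \ref{FinLenFinGen} to convert finite generation into finite length, then handle conditions (ii)--(iii) and the composition factors via $0\to N\cap M'\to N\to\rho(N)\to 0$ and Remark \ref{RemFinLen}, with the forward direction delegated to Proposition \ref{SF-SES}. Your write-up is in fact slightly more complete than the paper's, since you explicitly justify that the cyclic quotient $\rho(D\cdot v)$ has finite length and spell out the reverse inclusion $\Comp{M'}\cup\Comp{M''}\subseteq\Comp{M}$, both of which the paper leaves implicit.
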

\begin{proof}
We first assume that $M'$ and $M''$ are \SigmaFinite{.} 
Let $v\in M.$ We have a short exact sequence
$$
0\to M'\cap D\cdot v\to D\cdot v\to D\cdot\bar{v}\to 0.
$$
$M'\cap D\cdot v$ is finitely generated because $D$ is Notherian by Remark \ref{RemQuotientDMod}. Then,
$M'\cap D\cdot v$ has finite length by Lemma \ref{FinLenFinGen}, and so
$D\cdot v$ has finite length. Therefore, $M=\bigcup_{N\in\cM} N.$

Let $N\in\cM.$ Then, $N\cap M'\in\cM'$ and $\rho(N)\in \cM''.$ We have a short exact sequence
$$0\to N\cap M'\to N\to \rho(N)\to 0$$
of finite length $D$-modules,
and then  result follows by Remark \ref{RemFinLen}.  

The other direction follows from Proposition \ref{SF-SES}
\end{proof}
\begin{Prop}\label{SF-Loc}
Let  $M$ be a \SigmaFinite{}  
$D$-module. Then, $M_f$ is \SigmaFinite{}
for every $f\in S$.
\end{Prop}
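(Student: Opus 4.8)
The plan is to deduce the \SigmaFinite{ness} of $M_f$ from that of $M$ by localizing the directed union of finite length submodules that exhibits $M$, and then tracking composition factors through the (exact) localization functor.

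First I would record that $M_f$ is again a $D$-module supported at $mS$: localization at $f\in S$ carries $D$-modules to $D$-modules, with $M\to M_f$ a $D$-linear map (as recalled before Remark \ref{RemQuotientDMod}), and $\Supp_S M_f\subseteq\Supp_S M\subseteq\cV(mS)$. Writing $M=\bigcup_{N\in\cM} N$ as in Definition \ref{SigmaFinite}(i) and using that localization is exact and commutes with directed unions, I obtain $M_f=\bigcup_{N\in\cM} N_f$ with each $N_f\hookrightarrow M_f$, so every element of $M_f$ lies in some $N_f$.

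The core step is to show that for each $N\in\cM$ the localization $N_f$ again has finite length, with composition factors drawn from a fixed finite set. Since $(-)_f$ is exact and $D$-linear, a composition series of $N$ localizes to a filtration of $N_f$ whose successive quotients are the modules $L_f$ with $L\in\Comp{N}\subseteq\Comp{M}$. Thus it is enough to understand $L_f$ for a simple $L\in C(S/mS,R/mR)$. Each such $L$ is annihilated by $mS$, so $L_f$ coincides with the localization of $L$ over $S/mS$ at the image $\bar f$ of $f$; in particular $L_f=0$ whenever $f\in mS$. To control $L_f$ in general I would use the four-term exact sequence
$$0\to H^0_{(\bar f)}(L)\to L\to L_{\bar f}\to H^1_{(\bar f)}(L)\to 0$$
arising from the \v{C}ech complex on $\bar f$. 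Because $C(S/mS,R/mR)$ is closed under local cohomology, subobjects, quotients, and extensions \cite[Lemma $5$]{LyuFreeChar}, both local cohomology terms and hence $L_{\bar f}$ lie in $C(S/mS,R/mR)$; as objects of $C(S/mS,R/mR)$ have finite length over the field $K=R/m$ (\cite{LyuDMod} in characteristic $0$, \cite{LyuFMod} in characteristic $p>0$), $L_f$ has finite length and its composition factors again belong to $C(S/mS,R/mR)$.

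Granting this, each $N_f$ is a finite length $D$-module with $\Comp{N_f}\subseteq\bigcup_{L\in\Comp{M}}\Comp{L_f}$, and the latter set is finite because $\Comp{M}$ is finite by Definition \ref{SigmaFinite}(ii) and each $\Comp{L_f}$ is finite. Consequently $M_f=\bigcup_{N\in\cM} N_f$ exhibits $M_f$ as the union of its finite length $D$-submodules (condition (i)), the set $\Comp{M_f}$ is contained in the finite set $\bigcup_{L\in\Comp{M}}\Comp{L_f}$ (condition (ii)), and all these factors lie in $C(S/mS,R/mR)$ (condition (iii)); hence $M_f$ is \SigmaFinite{.} The step I expect to be the main obstacle is precisely the control of $L_f$: one must invoke both the stability of $C(S/mS,R/mR)$ under localization at an element and the finite length of its objects uniformly in characteristic, which are the deep inputs here. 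Once these are available the remainder is bookkeeping with directed unions and exactness of localization.
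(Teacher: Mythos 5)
Your proposal is correct and takes essentially the same route as the paper: both proofs reduce to the fact that the localization of each simple composition factor remains in $C(S/mS,R/mR)$ and has finite length there (Lyubeznik's characteristic-free results), and then push this through the directed union $M_f=\bigcup_{N\in\cM}N_f$. The only cosmetic differences are that the paper invokes closure of $C(S/mS,R/mR)$ under localization directly from \cite[Lemma $5$]{LyuFreeChar} instead of deriving it from the \v{C}ech four-term sequence, and it checks the conditions by embedding an arbitrary finite length $D$-submodule of $M_f$ into some $N'_f$ with $N'\subset M$ finitely generated, rather than by covering $M_f$ with the localized members of $\cM$.
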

\begin{proof}
Let $N\subset M_f$ be a module of finite length. We have that $N$ is a finitely generated $D$-module. 
Then there exists a finitely generated 
$D$-submodule $N'$ of $M$ such that $N\subset N'_f.$
We have that $N'_f$ has finite length and $\Comp{N'_f}=\bigcup_{V\in\Comp{N}} \Comp{V_f}$
because $V_f$ is in $C(S/mS,R/m)$ \cite{LyuFreeChar}.
Then,
$$
M_f=\bigcup_{N\subset \cM_f} N \subset \bigcup_{N\subset \cM} N_f=M_f
$$
and the result follows.
\end{proof}

\begin{Lemma}\label{SF-DirectSum}
Let  $M$  and $M'$ be  \SigmaFinite{}  
$D$-modules. Then, $M\oplus M'$ is also \SigmaFinite{.}
\end{Lemma}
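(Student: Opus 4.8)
The plan is to verify the three conditions of Definition \ref{SigmaFinite} directly, exploiting the fact that a direct sum splits. I want to avoid routing through the short exact sequence $0\to M\to M\oplus M'\to M'\to 0$ together with Proposition \ref{SF-SESCharZero}, since the latter assumes $R\supseteq \QQ$ and its Noetherian splitting argument; the splitting of a direct sum is exactly what lets the argument go through in arbitrary characteristic. First note that $M\oplus M'$ is supported at $mS$ because both summands are. Let $\cN$ denote the set of finite length $D$-submodules of $M\oplus M'$.

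For condition (i), I would take an arbitrary $v=(a,b)\in M\oplus M'$. Since $M$ and $M'$ are \SigmaFinite{,} there are finite length submodules $N\subset M$ and $N'\subset M'$ with $a\in N$ and $b\in N'$. Then $N\oplus N'$ is a $D$-submodule of $M\oplus M'$ containing $v$, and it has finite length because $\Length_D(N\oplus N')=\Length_D N+\Length_D N'$ by additivity of length along the split sequence $0\to N\to N\oplus N'\to N'\to 0$. Hence $\bigcup_{P\in\cN}P=M\oplus M'$. For condition (ii), the key device is the pair of $D$-linear projections $\pi\colon M\oplus M'\to M$ and $\pi'\colon M\oplus M'\to M'$. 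Given any $P\in\cN$, the images $\pi(P)$ and $\pi'(P)$ are finite length $D$-submodules of $M$ and $M'$, so $\Comp{\pi(P)}\subset\Comp{M}$ and $\Comp{\pi'(P)}\subset\Comp{M'}$. Moreover $P\subset\pi(P)\oplus\pi'(P)$, and applying Remark \ref{RemFinLen} both to the inclusion $P\hookrightarrow\pi(P)\oplus\pi'(P)$ and to the split sequence $0\to\pi(P)\to\pi(P)\oplus\pi'(P)\to\pi'(P)\to 0$ gives
$$\Comp{P}\subset\Comp{\pi(P)\oplus\pi'(P)}=\Comp{\pi(P)}\cup\Comp{\pi'(P)}\subset\Comp{M}\cup\Comp{M'}.$$
Taking the union over $P\in\cN$ shows $\Comp{M\oplus M'}\subset\Comp{M}\cup\Comp{M'}$, which is finite. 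The reverse containment holds since each finite length submodule $N\subset M$ embeds as $N\oplus 0\subset M\oplus M'$ (and similarly for $M'$), so one in fact obtains the equality $\Comp{M\oplus M'}=\Comp{M}\cup\Comp{M'}$. Condition (iii) is then immediate: every composition factor of $M\oplus M'$ lies in $\Comp{M}\cup\Comp{M'}$, and each of these is in $C(S/mS,R/mR)$ by the assumption that $M$ and $M'$ are \SigmaFinite{.}

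I do not anticipate a serious obstacle, as this is essentially bookkeeping with composition factors. The only point requiring care is to keep the argument characteristic-free, that is, to resist invoking Proposition \ref{SF-SESCharZero}. The containment $P\subset\pi(P)\oplus\pi'(P)$ for a finite length submodule of a direct sum, which replaces the Noetherian argument, is precisely what makes this work without hypotheses on the residue characteristic.
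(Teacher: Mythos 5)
Your proof is correct and takes essentially the same approach as the paper's: it verifies condition (i) through the finite length submodules $N\oplus N'$ and then handles the composition factor conditions with Remark \ref{RemFinLen}. The only difference is that you explicitly bound the composition factors of an \emph{arbitrary} finite length submodule $P\subset M\oplus M'$ via the projection containment $P\subset\pi(P)\oplus\pi'(P)$, a bookkeeping point the paper's proof compresses into ``the rest follows from Remark \ref{RemFinLen}.''
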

\begin{proof}
It is clear that $M\oplus M'$ is supported on $mS.$
For every $(v,v')\in M\oplus M'$, there exist $N$ and $N',$ $D$-modules
of finite length. such that $v\in N$ and $v'\in N'.$ Then, $N\oplus N'\subset M\oplus M'$ has finite length and
$(v,v')\in N\oplus N'.$ Therefore, 
$$
\bigcup_{N\subset \cM, N'\subset \cM'} N\oplus N'=M\oplus M',
$$
and the $M\oplus M'$ is union of its $D$-modules of finite length.
The rest follows from Remark \ref{RemFinLen}.
\end{proof}

\begin{Cor}\label{SF-Loc Coh}
Let  $M$ be a \SigmaFinite{}  
$D$-module. Then, $H^i_I(M)$ is \SigmaFinite{}
for every ideal $I\subset S$ and $i\in\NN$.
\end{Cor}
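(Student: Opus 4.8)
The plan is to realize $H^i_I(M)$ as a subquotient of the terms of the Čech complex and then invoke the stability of \SigmaFinite{ness} under localization, direct sums, and passage to subobjects and quotients, all of which have been established earlier. First I would fix generators $f_1,\ldots,f_\ell$ of $I$ and compute $H^i_I(M)$ as the $i$-th cohomology of the Čech complex $\Cech(\underline{f};M)$, whose $k$-th term is the direct sum $C^k=\bigoplus_{1\leq j_1<\cdots<j_k\leq \ell} M_{f_{j_1}\cdots f_{j_k}}$. Each localization $M_{f_{j_1}\cdots f_{j_k}}$ is \SigmaFinite{} by Proposition \ref{SF-Loc}, and hence each term $C^k$ is \SigmaFinite{} by Lemma \ref{SF-DirectSum} applied inductively over the finitely many summands. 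The Čech differentials $d^k\colon C^k\to C^{k+1}$ are morphisms of $D$-modules, since the localization maps $M\to M_f$ are $D$-linear as recalled in the preliminaries.

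Next I would identify $H^i_I(M)=\Ker(d^i)/\Im(d^{i-1})$ within the category of $D$-modules. Since $\Ker(d^i)$ is a $D$-submodule of the \SigmaFinite{} module $C^i$, Proposition \ref{SF-SES} shows that $\Ker(d^i)$ is \SigmaFinite{.} The relation $d^i\circ d^{i-1}=0$ guarantees that $\Im(d^{i-1})$ is a $D$-submodule of $\Ker(d^i)$, so applying Proposition \ref{SF-SES} to the short exact sequence
$$
0\to \Im(d^{i-1})\to \Ker(d^i)\to H^i_I(M)\to 0
$$
yields that the quotient $H^i_I(M)$ is \SigmaFinite{.}

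Finally I would verify the support hypothesis of Definition \ref{SigmaFinite}(i): since $\Supp_S(M)\subset \cV(mS)$, every localization of $M$ and hence every subquotient arising in the Čech computation has support contained in $\cV(mS)$, so $H^i_I(M)$ is indeed supported at $mS$; this is already absorbed into the conclusion of Proposition \ref{SF-Loc}. I do not expect a genuine obstacle, as the argument is a formal assembly of the earlier stability results; the only point demanding a little care is confirming that every map in the Čech complex, and therefore the formation of its kernels, images, and cohomology, takes place within the category of $D$-modules, which is precisely what makes Proposition \ref{SF-SES} applicable.
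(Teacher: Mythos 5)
Your proof is correct and follows essentially the same route as the paper: the paper's own argument also takes generators $f_1,\ldots,f_\ell$ of $I$, observes that each term of $\Cech(\underline{f};M)$ is \SigmaFinite{} (via Proposition \ref{SF-Loc} and Lemma \ref{SF-DirectSum}), and then obtains $H^i_I(M)$ as a subquotient using Proposition \ref{SF-SES}. Your write-up merely makes explicit the kernel/image short exact sequences and the support check that the paper leaves implicit.
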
 
\begin{proof}
Let $f_1,\ldots,f_\ell$ be generators for $I$.
We have that $\Cech(\underline{f};M)$ is \SigmaFinite{} by Lemma \ref{SF-DirectSum}.
Then $H^i_I(M)$ is also \SigmaFinite{} by Proposition \ref{SF-SES}. 
\end{proof}

\begin{Prop}\label{SF-DirectSystem}
Let  $M_t$ be an inductive direct system of \SigmaFinite{}  
$D$-modules. If $\bigcup_t\Comp{M_t}$ is finite, then
$\lim\limits_{\to t} M_t$ is \SigmaFinite{} and
$\Comp{M}\subset\bigcup_t\Comp{M_t}.$ 
\end{Prop}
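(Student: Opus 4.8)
The plan is to verify the three conditions of Definition \ref{SigmaFinite} directly for $M=\lim\limits_{\to t} M_t$, using the structural maps $\phi_t\colon M_t\to M$. First I would record that $M$ is supported at $mS$: every element of $M$ is the image $\phi_t(v_t)$ of some $v_t\in M_t$, and since $v_t$ is annihilated by a power of $mS$, so is its image. For condition (i), given $v\in M$ I write $v=\phi_t(v_t)$; as $M_t$ is \SigmaFinite{} there is a finite length submodule $N_t\ni v_t$, and its image $\phi_t(N_t)\subset M$ is again of finite length (a quotient of the finite length module $N_t$) and contains $v$. Hence $M$ is the union of its finite length $D$-submodules.

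The crux is condition (ii) together with the inclusion $\Comp{M}\subset\bigcup_t\Comp{M_t}$, and here the subtlety is that I must control the composition factors of an \emph{arbitrary} finite length submodule $N\subset M$, not merely those arising as images $\phi_t(N_t)$. The idea is that since $N$ has finite length it is finitely generated, so by directedness of the system all of its generators lie in $\phi_t(M_t)$ for a single index $t$, giving $N\subset\phi_t(M_t)$. Applying Proposition \ref{SF-SES} to the short exact sequence $0\to\Ker\phi_t\to M_t\to\phi_t(M_t)\to 0$, the quotient $\phi_t(M_t)$ is \SigmaFinite{} with $\Comp{\phi_t(M_t)}\subset\Comp{M_t}$. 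Since $N$ is itself a finite length submodule of the \SigmaFinite{} module $\phi_t(M_t)$, the definition of $\Comp{\cdot}$ yields $\Comp{N}\subset\Comp{\phi_t(M_t)}\subset\Comp{M_t}$. Taking the union over all finite length submodules $N\in\cM$ then gives $\bigcup_{N\in\cM}\Comp{N}\subset\bigcup_t\Comp{M_t}$, which is finite by hypothesis; this establishes condition (ii) and simultaneously the asserted inclusion $\Comp{M}\subset\bigcup_t\Comp{M_t}$.

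Condition (iii) I expect to be immediate from the inclusion just proved: any simple factor $N\in\Comp{M}$ lies in some $\Comp{M_t}$, and since $M_t$ is \SigmaFinite{} that factor already belongs to $C(S/mS,R/mR)$. I anticipate that the one genuine obstacle is the trapping argument inside condition (ii)—arranging that an arbitrary finite length submodule of the colimit is captured inside a single $\phi_t(M_t)$, so that its composition factors are governed by $\Comp{M_t}$. The two facts that make this work are that finite length $D$-modules are finitely generated and that the system is directed; once $N\subset\phi_t(M_t)$ is secured, the comparison of composition factors is purely formal, using Proposition \ref{SF-SES} and Remark \ref{RemFinLen}.
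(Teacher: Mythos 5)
Your proposal is correct and follows essentially the same route as the paper's proof: both replace each $M_t$ by its image $\phi_t(M_t)$ (which is \SigmaFinite{} by Proposition \ref{SF-SES}), and both trap an arbitrary finite length submodule $N\subset M$ inside a single $\phi_t(M_t)$ using that finite length implies finitely generated together with directedness, whence $\Comp{N}\subset\Comp{M_t}$. Your write-up merely spells out the verification of conditions (i)--(iii) that the paper compresses into ``the result follows.''
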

\begin{proof}
Let $M=\lim\limits_{\to t} M_t$ and $\varphi_t: M_t\to M$ the morphism induced by the limit.
We have that $\phi_t(M_t)$ is a \SigmaFinite{}  $D$-module by Proposition \ref{SF-SES}. 
We may replace $M_t$ by $\phi_t(M_t)$ by Remark \ref{RemFinLen}, and assume that $M=\bigcup M_t$ and $M_{t}\subset M_{t+1}.$
If $N\subset M$ has finite length as $D$-module, then it is finitely generated and there exists a $t$ 
such that $N\subset M_t.$ Therefore,
$M=\bigcup_t M_t=\bigcup_t\bigcup_{N\in\cM_t} N$
and the result follows.
\end{proof}

\section{Associated Primes}\label{SecAss}

\begin{Notation}
Throughout this section $(R,m,K)$ denotes a local ring and $S$ denotes either $R[x_1,\ldots,x_n]$ or $R[[x_1,\ldots,x_n]]$. In addition, $D$
denotes $D(S,R).$
\end{Notation}

\begin{Lemma}\label{Comp}
Let $J\subset S$ be an ideal and $M$ be an $R$-module of finite length.
Then, $H^i_{J}(M\otimes_R S )$ is a $D(S,R)$-module of finite length. Moreover,
$\Comp{H^i_{JS}(M\otimes_R S )}\subset \bigcup_{j}\Comp{H^j_{JS}(S/mS )}$.  
\end{Lemma}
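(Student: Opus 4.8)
The plan is to reduce to the case $M=K$ by filtering $M$ and propagating through local cohomology, which is a $\delta$-functor on $D$-modules. Since $R$ is local and $M$ has finite length, there is a filtration $0=M_0\subset M_1\subset\cdots\subset M_t=M$ of $R$-modules whose successive quotients $M_j/M_{j-1}$ are all isomorphic to the residue field $K=R/m$. I would argue by induction on $t=\Length_R M$, the case $t=1$ being exactly $M\cong K$, i.e. $M\otimes_R S\cong S/mS$.

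First I would record the $D$-module structure. Because $S$ is $R$-flat (it is a polynomial or power series ring), tensoring the filtration with $S$ yields an exact filtration $0=M_0\otimes_R S\subset\cdots\subset M_t\otimes_R S=M\otimes_R S$. Each $\delta\in D=D(S,R)$ is $R$-linear as a map $S\to S$, so $\mathrm{id}_M\otimes\delta$ makes $M\otimes_R S$ a $D$-module and each $M_j\otimes_R S$ a $D$-submodule; the successive quotients are $(M_j/M_{j-1})\otimes_R S\cong (R/m)\otimes_R S=S/mS$, and by Remark \ref{RemQuotientDMod}(i) this is precisely the natural $D$-structure given by the surjection $\rho:D(S,R)\to D(S/mS,R/mR)$. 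Thus $M\otimes_R S$ is a successive extension of $t$ copies of $S/mS$ in the category of $D$-modules. For the base case I invoke the known finiteness over a field: since $S/mS$ is a polynomial or power series ring over $K$, each $H^j_{JS}(S/mS)$ has finite length as a $D(S/mS,R/mR)$-module \cite{LyuDMod,LyuFreeChar,H-S}, and as $\rho$ is surjective its $D(S,R)$-submodules coincide with its $D(S/mS,R/mR)$-submodules, so it has the same finite length as a $D$-module. This settles $t=1$ and also furnishes the modules on the right-hand side of the claimed containment.

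For the inductive step, put $M'=M_{t-1}$, so that $0\to M'\otimes_R S\to M\otimes_R S\to S/mS\to 0$ is a short exact sequence of $D$-modules. Applying the long exact sequence of local cohomology and using that $H^i_{JS}(-)$ sends $D$-modules to $D$-modules, I extract for each $i$ a short exact sequence of $D$-modules
$$
0\to \Im\bigl(H^i_{JS}(M'\otimes_R S)\to H^i_{JS}(M\otimes_R S)\bigr)\to H^i_{JS}(M\otimes_R S)\to \Im\bigl(H^i_{JS}(M\otimes_R S)\to H^i_{JS}(S/mS)\bigr)\to 0.
$$
The left term is a quotient of $H^i_{JS}(M'\otimes_R S)$, of finite length by induction, and the right term is a submodule of $H^i_{JS}(S/mS)$, of finite length by the base case. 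Since submodules and quotients of finite-length $D$-modules are again of finite length with composition factors among those of the original (Jordan--H\"older), Remark \ref{RemFinLen} shows that $H^i_{JS}(M\otimes_R S)$ has finite length and
$$
\Comp{H^i_{JS}(M\otimes_R S)}\subset \Comp{H^i_{JS}(M'\otimes_R S)}\cup \Comp{H^i_{JS}(S/mS)}\subset \bigcup_j \Comp{H^j_{JS}(S/mS)},
$$
the last inclusion by the inductive hypothesis applied to $M'$. This proves both assertions.

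The only genuine input is the base case, namely the finite length of $H^j_{JS}(S/mS)$ as a $D$-module over the field $K$; once that is granted, the filtration together with the long-exact-sequence bookkeeping is routine. The point to be careful about is that the $D$-structure on the successive quotients $S/mS$ is genuinely the one induced by $\rho$, so that the composition factors appearing are honestly those of $H^j_{JS}(S/mS)$ rather than of some twisted module; this is exactly what Remark \ref{RemQuotientDMod}(i) guarantees.
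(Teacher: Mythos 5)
Your proof is correct and takes essentially the same route as the paper: induction on $\Length_R M$, tensoring a length-one short exact sequence with the flat extension $S$, and bookkeeping composition factors through the long exact sequence of local cohomology via Remark \ref{RemFinLen}. The only (immaterial) difference is that you quotient by $M_{t-1}$ so that $K$ appears as the cokernel, whereas the paper puts $K$ as the kernel in $0\to K\to M\to M'\to 0$; your added care about the $D(S,R)$-structure on $S/mS$ factoring through $\rho$ and about extracting short exact sequences of images from the long exact sequence makes explicit what the paper leaves implicit.
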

\begin{proof}
Our proof will be by induction on $h=\Length_R (M)$. If $h=1$,  we have that
$H^i_{JS}(R/m\otimes_R S)=H^i_{JS}( S/mS ),$ which has finite length as a 
$D(S,R)$-module  \cite[Theorem $2,$ Corollary $3$]{LyuFreeChar}  and 
by Remark \ref{RemQuotientDMod}. Clearly,   
$$\Comp{H^i_{JS}(M\otimes_K A )}=\Comp{H^i_{JS}(R/m\otimes_K A)}=\bigcup_{j}\Comp{H^j_{JS}(S/mS )}$$ in this case.
Suppose that the statement is true for $h$ and $\Length_R (M)=h+1$.
We have a short exact sequence of $R$-modules, 
$0\to K\to M\to M'\to 0$, where $h=\Length_R (M')$. Since $S$ is flat over $R$, we have that
$0\to K\otimes_R S\to M\otimes_R S\to M'\otimes_R S\to 0$ is also exact.
Then, we have a long exact sequence
$$
\ldots \to H^i_{J}( K\otimes_R S)\to H^i_{J}(M\otimes_R S)\to H^i_{J}( M'\otimes_R S)\to \ldots $$
Then  $H^i_{J}(M\otimes_R S)$ has finite length by the induction hypothesis and Remark \ref{RemFinLen}.
In addition,
\begin{align*}
\Comp{H^i_{J} (M\otimes_R S)} & \subset \Comp{H^i_{J} (M'\otimes_R S)} \bigcup \Comp{H^i_{J} (K\otimes_R S)} \\
& \subset \bigcup_j \Comp{ H^j_{J} (S/mS)}.
\end{align*}
and the result follows by the induction hypothesis and Remark \ref{RemFinLen}.
\end{proof}

\begin{Prop}\label{SF I mS}
Let $I\subset S$ be an ideal containing $mS$. Then
$ H^i_I(S)$ is \SigmaFinite{} for every $i\in\NN.$
\end{Prop}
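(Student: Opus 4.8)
The plan is to reduce the statement to the case already handled by Lemma~\ref{Comp}, namely local cohomology of modules of the form $M\otimes_R S$ with $M$ of finite length over $R$, and then pass to a direct limit using Proposition~\ref{SF-DirectSystem}. The key observation is that since $I\supset mS$, the ideal $I$ contains $m^tS$ for the relevant powers, and local cohomology supported at $I$ can be computed as the cohomology of a \v{C}ech complex built from generators of $I$. More usefully, because $mS\subset\sqrt{I}$, we have $H^i_I(S)=H^i_I\left(\varinjlim_t S/m^tS\right)$ only after we arrange things correctly; instead I would work with $\Gamma_{mS}$-torsion directly.

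First I would note that $H^i_I(S)$ is supported at $mS$: indeed every element is killed by a power of $I$, hence by a power of $mS$ since $mS\subset\sqrt I$, so $H^i_I(S)$ is $mS$-torsion and thus supported at $mS$ as required in Definition~\ref{SigmaFinite}. Next, the central idea is to express $S$ as the limit $S=\varinjlim_t \left(S/m^tS\right)$ is false, so instead I would use the following: since $H^i_I(S)$ is $mS$-power torsion, I can write
$$
H^i_I(S)=\bigcup_t \left(0:_{H^i_I(S)} m^t\right),
$$
and I would seek to understand each piece through finite length $R$-modules. Concretely, I would consider the short exact sequences $0\to m^tS\to S\to S/m^tS\to 0$ and the associated long exact sequence in local cohomology. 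The quotient $R/m^t$ is a finite length $R$-module, so Lemma~\ref{Comp} applies to $H^i_I(S/m^tS)=H^i_I\left((R/m^t)\otimes_R S\right)$, giving that it is a finite length $D$-module with composition factors drawn from the fixed finite set $\bigcup_j\Comp{H^j_I(S/mS)}$.

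The mechanism I expect to carry the argument is that $H^i_{I}(S)$, being $mS$-torsion, agrees with $\varinjlim_t H^i_I(S/m^tS)$: more precisely, applying $\Gamma_{mS}$ and using that $H^i_I(S)=\Gamma_{mS}H^i_I(S)=\varinjlim_t\Hom_S(S/m^tS, H^i_I(S))$ and comparing with the local cohomology of the quotients via the long exact sequences, I would identify $H^i_I(S)$ with a direct limit of the finite length $D$-modules $H^i_I(S/m^tS)$. Granting this, each term in the direct system is \SigmaFinite{} (being of finite length, using Remark~\ref{RemFinLen}(i) and condition (iii) supplied by Lemma~\ref{Comp}), and the union of their composition factors is contained in the finite set $\bigcup_j\Comp{H^j_I(S/mS)}$, so the hypothesis of Proposition~\ref{SF-DirectSystem} is met and the conclusion follows.

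The hard part will be establishing the identification of $H^i_I(S)$ with the direct limit $\varinjlim_t H^i_I(S/m^tS)$ as $D$-modules, compatibly with the $D$-structure; this requires care because local cohomology does not in general commute with the inverse limit defining $S$, and one must instead exploit that the relevant cohomology is already $mS$-torsion so that only the torsion part, computed from the finite-length quotients $S/m^tS$, contributes. I would handle this by using the \v{C}ech description of $H^i_I(-)$ together with flatness of $S$ over $R$ to commute $\otimes_R S$ past the cohomology, reducing everything to the behavior of the $R/m^t$ and thereby to Lemma~\ref{Comp}; the compatibility of the $D$-module structures on the limit is guaranteed because the transition maps are $D$-linear and $C(S/mS,R/mR)$ is closed under the relevant operations.
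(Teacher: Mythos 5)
Your overall skeleton---realize $H^i_I(S)$ as a direct limit of finite length $D$-modules whose composition factors lie in the fixed finite set $\bigcup_j\Comp{H^j_I(S/mS)}$ (via Lemma \ref{Comp}), then invoke Proposition \ref{SF-DirectSystem}---is exactly the paper's. But the bridge you propose, the identification $H^i_I(S)\cong\varinjlim_t H^i_I(S/m^tS)$, is not just ``the hard part'': it is false. Take $R$ any local ring with $\dim R=d>0$ and $I=mS$. Each $S/m^tS$ is killed by $m^tS=I^t$, hence is $I$-torsion, so $H^i_I(S/m^tS)=0$ for all $i>0$ and all $t$; thus any direct limit of these modules vanishes in positive degrees, whatever transition maps one chooses. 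Yet $H^d_{mS}(S)=H^d_m(R)\otimes_R S\neq 0$ by flatness and Grothendieck nonvanishing. The obstruction is structural: the natural maps among the quotients are the surjections $S/m^{t+1}S\to S/m^tS$, which form an inverse system, and local cohomology does not commute with the inverse limit recovering $S$; there is no direct system of the modules $H^i_I(S/m^tS)$ that computes $H^i_I(S)$. Your fallback through $\Gamma_{mS}H^i_I(S)=\varinjlim_t\Hom_S(S/m^tS,H^i_I(S))$ does not repair this: those torsion pieces are indeed $D$-submodules, but proving that they have finite length with controlled composition factors is essentially the statement to be proved, and Lemma \ref{Comp} says nothing about $\Hom_S(S/m^tS,-)$ applied to a local cohomology module (compare the Remark following Proposition \ref{SF-Ext}, where such a Hom is an infinite direct sum of copies of $S/mS$).

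The paper's actual bridge replaces the quotients $R/m^t$ by Koszul cohomology. Choose a system of parameters $\underline{f}=f_1,\ldots,f_d$ of $R$ and generators $\underline{g}$ of $I$, and form the double complexes $T_t=\cK(\underline{f}^t;R)\otimes_R\Cech(\underline{g};S)$. The transition maps $\Tot(T_t)\to\Tot(T_{t+1})$ come from the multiplication maps of Figure \ref{LimitKoszul} (not from projections), so $\lim\limits_{\to t}\Tot(T_t)=\Cech(\underline{f},\underline{g};S)$, whose cohomology is $H^i_I(S)$ because $mS\subset I$. For each fixed $t$, the spectral sequence of $T_t$ has $E^{p,q}_{2,t}=H^p_I\bigl(H^q(\cK(\underline{f}^t;R))\otimes_R S\bigr)$ by flatness, and the coefficients $H^q(\cK(\underline{f}^t;R))$ are finite length $R$-modules since $\underline{f}$ is a system of parameters; so Lemma \ref{Comp} applies to every $E_2$-term. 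The finite length property and the containment of composition factors in $\bigcup_j\Comp{H^j_I(S/mS)}$ then propagate to the abutments $H^i(\Tot(T_t))$ by Remark \ref{RemFinLen}, and exactness of direct limits together with Proposition \ref{SF-DirectSystem} finishes the proof. So the correct finite length approximations are the modules $H^i(\Tot(T_t))$, assembled from all the Koszul cohomologies by a spectral sequence---not the modules $H^i_I(S/m^tS)$, which compute the wrong thing.
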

\begin{proof}
Let $f_1,\ldots, f_d$ be a system of parameters for $R$ 
and $g_1,\ldots, g_\ell$ be a set of generators for $I$.
Let $\underline{f}^t $ denote the sequence $f^t_1,\ldots, f^t_\ell.$
Let $T_i=\{T^{p,q}_t\}$ be the double complex of $D(S,R)$-modules given by the tensor product $\cK(\underline{f};R) \otimes_R \Cech(\underline{g};S)$. 
The direct limit  $\cK(\underline{f}^t;R)$ introduced in Figure \ref{LimitKoszul}, 
induces a direct limit of double complexes 
$\Tot(T_t)\to\Tot(T_{t+1})$. Since $\lim\limits_{\to t}\cK(\underline{f}^t;R)=\Cech(\underline{f};R)$,
we have that $\lim\limits_{\to t} \Tot(T_t)=\Cech(\underline{f,g}; S)$.
Let $E^{p,q}_{r,t}$ be the spectral sequence associated to $T_t$.
We have that
$$
E^{p,q}_{2,t}=H^{p}_{I} ( H^q (\cK(\underline{f}^t; S))  
\Rightarrow  E^{p,q}_{\infty, t} =H^{p+q}\Tot (T_t).
$$
We note that $H^q(\cK(\underline{f}^t;  S))=H^q (\cK(\underline{f}^t; R))\otimes_R S,$ because $S$ is $R$-flat.
Since $H^q (\cK(\underline{f}^t; R))$ has finite length as an $R$-module, we have that
$E^{p,q}_{2,t}$ is a $D(S,R)$-module of finite length for all $p,q\in\NN$ and that $\Comp{E^{p,q}_{2,t}}=\bigcup_j \Comp{H^i_{JS}(S/mS )}$ by Lemma \ref{Comp}. Moreover,
$E^{p,q}_{r,t}$ is a $D(S,R)$-module of finite length, and
$$\Comp{E^{p,q}_{r,t}}\subset \bigcup_{p,q} \Comp{ E^{p,q}_{2,t}}= \bigcup_{j} \Comp{H^j_{I}(S/mS )}$$ for $r>2.$
Then, $\Comp{H^{i} (\Tot (T_t))}\subset  \bigcup_{j} \Comp{H^i_{I}(S/mS)}$ for every $j,t\in \NN$
by Remark \ref{RemFinLen}; in particular, $\bigcup_t \Comp{H^i \Tot (T_t)}$ is finite and every element there belongs to $C(S/mS,R/mR)$.
Therefore, $E^{p,q}_{r,t}$ is a  \SigmaFinite{} $D(S,R)$-module.
Moreover,
$$
H^i_I(S)=H^i(\Cech(\underline{f,g}; S))=H^i(\lim\limits_{\to t} \Tot(T_t))=\lim\limits_{\to t} H^i(\Tot(T_t))
$$
because the direct limit is exact. Hence, $H^i_I(S)$ is \SigmaFinite{} by Proposition \ref{SF-DirectSystem}.
\end{proof}

\begin{Cor}\label{AssPrimes}
Let $I\subset S$ be an ideal containing $mS$ and $J_1,\ldots, J_\ell\subset S$ be any ideals. Then
$H^{j_1}_{J_1}\cdots H^{j_\ell}_{J_\ell}H^i_I(S)$ 
is \SigmaFinite{.}
\end{Cor}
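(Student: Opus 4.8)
The plan is to prove Corollary \ref{AssPrimes} by iterating the results already established, reducing the nested statement to a single application of Corollary \ref{SF-Loc Coh}. The starting point is Proposition \ref{SF I mS}, which tells us that $H^i_I(S)$ is \SigmaFinite{} whenever $mS\subset I$, which holds here by hypothesis. So the innermost module in the iterated expression is already known to be \SigmaFinite{.} The key observation driving the whole argument is that the class of \SigmaFinite{} $D$-modules is closed under taking local cohomology with respect to \emph{arbitrary} ideals: this is precisely the content of Corollary \ref{SF-Loc Coh}.

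The argument I would give is a straightforward induction on $\ell$, the number of nested local cohomology functors applied on top of $H^i_I(S)$. For the base case $\ell=0$, the module is just $H^i_I(S)$, which is \SigmaFinite{} by Proposition \ref{SF I mS}. For the inductive step, suppose that $M=H^{j_2}_{J_2}\cdots H^{j_\ell}_{J_\ell}H^i_I(S)$ is \SigmaFinite{.} Then $M$ is a \SigmaFinite{} $D$-module, and applying Corollary \ref{SF-Loc Coh} with the ideal $J_1$ shows that $H^{j_1}_{J_1}(M)=H^{j_1}_{J_1}\cdots H^{j_\ell}_{J_\ell}H^i_I(S)$ is again \SigmaFinite{.} This completes the induction.

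One point that deserves attention, rather than a genuine obstacle, is that Corollary \ref{SF-Loc Coh} is stated for local cohomology with support in an ideal of $S$, and here the $J_k$ are indeed arbitrary ideals of $S$, so the hypothesis is met at every stage. The only thing one must verify is that each intermediate module $H^{j_k}_{J_k}\cdots H^i_I(S)$ genuinely carries a $D$-module structure to which Corollary \ref{SF-Loc Coh} applies; but this is guaranteed since the local cohomology of a $D(S,R)$-module is again a $D(S,R)$-module, as recalled in the preliminaries, and each module in the chain is \SigmaFinite{} (in particular a $D$-module supported at $mS$) by the inductive hypothesis. No new estimates on composition factors are needed, since \SigmaFiniteness{} is preserved outright.

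The main thing I expect to be routine but worth stating cleanly is the bookkeeping that the support condition built into the definition of \SigmaFiniteness{} (support at $mS$) is automatically maintained: once we know $H^i_I(S)$ is supported at $mS$ because $mS\subset I$, every subsequent local cohomology module is a subquotient of localizations and hence remains supported at $mS$, so clause (i) of Definition \ref{SigmaFinite} is never in jeopardy. In summary, the whole corollary follows by peeling off the outer functors one at a time and invoking Corollary \ref{SF-Loc Coh}, with Proposition \ref{SF I mS} supplying the base case; there is no hard step beyond this iteration.
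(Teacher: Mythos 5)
Your proposal is correct and is essentially identical to the paper's own proof, which simply cites Proposition \ref{SF I mS} for the base module and Corollary \ref{SF-Loc Coh} for the iteration; your induction on $\ell$ is just that one-line argument written out in full, with the same key ingredients and no new ideas required.
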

\begin{proof}
This is a consequence of Proposition \ref{SF I mS} and Corollary \ref{SF-Loc Coh}.
\end{proof}
\begin{proof}[Proof of Theorem \ref{MainOne}]
This is a consequence of Remark \ref{SF-AssPrimes} and Corollary  \ref{AssPrimes}.
\end{proof}

\begin{Prop}\label{AssDimOne}
Let $(R,m,K)$ be any local ring. Let $S$ denote either $R[x_1,\ldots, x_n]$ or $R[[x_1,\ldots, x_n]]$. 
Let $I\subset S$ be an ideal, such that $\dim R/I\cap R\leq 1$. 
Then,
$$
\Ass_S H^0_m H^i_I(S)
$$
is finite for every $i\in \NN.$  
\end{Prop}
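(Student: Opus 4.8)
The plan is to reduce to an ideal that contains $mS$, where the relevant local cohomology is already \SigmaFinite{} by Proposition \ref{SF I mS}, and then to recover $H^0_{mS}H^i_I(S)$ as a $D$-submodule of the resulting \SigmaFinite{} module; the hypothesis $\dim R/(I\cap R)\leq 1$ is exactly what allows a single hypersurface section to accomplish this. First I would choose the cutting element. Since $R$ is Noetherian and $\dim R/(I\cap R)\leq 1$, there are finitely many primes $P_1,\ldots,P_k$ minimal over $I\cap R$, and each $P_j\neq m$ satisfies $\dim R/P_j=1$, so $m$ is the only prime of $R$ properly containing it. By prime avoidance I pick $x\in m$ lying in none of the $P_j$ with $P_j\neq m$; then every prime of $R$ containing $(I\cap R)+xR$ must equal $m$, so $\sqrt{(I\cap R)+xR}=m$ and hence $(mS)^k=m^kS\subseteq (I\cap R)S+xS\subseteq I+xS$ for some $k$. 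In particular $mS\subseteq\sqrt{I+xS}$, so $H^j_{I+xS}(S)\cong H^j_{I+xS+mS}(S)$ is \SigmaFinite{} for every $j$ by Proposition \ref{SF I mS}.

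Next I would extract the $xS$-torsion. Taking the \v{C}ech complex on a generating set of $I$ together with $x$ yields a long exact sequence of $D(S,R)$-modules
\begin{equation*}
\cdots\to H^i_{I+xS}(S)\xrightarrow{\ \beta\ }H^i_I(S)\to H^i_I(S)_x\to H^{i+1}_{I+xS}(S)\to\cdots,
\end{equation*}
in which $H^i_I(S)\to H^i_I(S)_x$ is the localization map, a $D$-module morphism whose kernel is exactly the $xS$-power-torsion $H^0_{xS}(H^i_I(S))$. Hence $H^0_{xS}(H^i_I(S))=\Im\beta$ is a $D$-module quotient of the \SigmaFinite{} module $H^i_{I+xS}(S)$, and is therefore itself \SigmaFinite{} by Proposition \ref{SF-SES}.

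Finally I would descend from $xS$ to $mS$. Since $x\in m$ we have $xS\subseteq mS$, so any element annihilated by a power of $mS$ is annihilated by a power of $x$; that is, $H^0_{mS}(H^i_I(S))$ is a $D$-submodule of $H^0_{xS}(H^i_I(S))$. By Proposition \ref{SF-SES} a $D$-submodule of a \SigmaFinite{} module is \SigmaFinite{,} so $H^0_{mS}H^i_I(S)$ is \SigmaFinite{} and hence has only finitely many associated primes by Remark \ref{SF-AssPrimes}, which is the assertion.

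The step I expect to be most delicate is the identification in the second paragraph: confirming that the \v{C}ech long exact sequence is genuinely a sequence of $D(S,R)$-modules with $D$-linear maps, so that ``$D$-module quotient'' is legitimate and Proposition \ref{SF-SES} applies, and that $\Ker\big(H^i_I(S)\to H^i_I(S)_x\big)$ coincides with $H^0_{xS}(H^i_I(S))$. By contrast, the choice of $x$ with $\sqrt{(I\cap R)+xR}=m$ is where the hypothesis $\dim R/(I\cap R)\leq 1$ is genuinely used, a single element sufficing only in this range; it should be read correctly in the degenerate case where $m$ is itself minimal over $I\cap R$, in which $H^i_I(S)$ is already \SigmaFinite{} and any $x\in m$ works.
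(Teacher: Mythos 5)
Your proof is correct and is essentially the paper's own argument: both choose an element $f\in m$ with $mS\subseteq\sqrt{(I,f)S}$ (using $\dim R/(I\cap R)\leq 1$), apply Proposition \ref{SF I mS} to $H^i_{(I,f)S}(S)$, and then use the long exact sequence $\cdots\to H^i_{(I,f)S}(S)\to H^i_I(S)\to H^i_I(S_f)\to\cdots$ together with Proposition \ref{SF-SES} and Remark \ref{SF-AssPrimes}. The only difference is cosmetic bookkeeping at the end: the paper splits $\Ass_S H^i_I(S)\cap\cV(mS)$ over the two images in that sequence, whereas you identify $H^0_{mS}H^i_I(S)$ as a $D$-submodule of the image of the first map, which incidentally records the slightly stronger fact that $H^0_{mS}H^i_I(S)$ is itself \SigmaFinite{.}
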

\begin{proof}
Since $\dim R/(I\cap R)\leq 1$, there exists $f\in R$ such that $mS\subset \sqrt{I+fS}.$
We have the exact sequence
$$
\ldots \to H^i_{(I,f)S}(S) \FDer{\alpha_i} H^i_I(S) \FDer{\beta_i} H^i_I(S_f)\to\ldots.
$$
Then, 
$$\Ass_S H^i_I(S)\cap \cV(mS)\subset (\Ass_S \Im(\alpha_i)\cap \cV(mS) )\bigcup (\Ass_S \Im(\beta_i)\cap \cV(mS))$$
Since $H^i_{(I,f)S}(S)$ is a \SigmaFinite{} $D(S,R)$-module by Proposition \ref{SF I mS}, we have that $\Im(\alpha_1)$ is also \SigmaFinite{} by Proposition 
\ref{SF-SES}, and so
$\Ass_S \Im(\alpha_i)$ is finite. Since $\Im(\beta_i)\subset H^i_I(S_f),$ $\Ass_S \Im(\beta_i)\cap \cV(mS)=\emptyset$.
Therefore, 
$$\Ass_S H^i_I(S)\cap \cV(mS)=\Ass_S H^0_{mS}H^i_I(S)$$
is finite.
\end{proof}

\begin{Prop}
Suppose that $R$ is a ring of characteristic $0$ and that $\dim R/(I\cap R)\leq 1.$
Then $H^j_{mS}H^i_I(S)$ is \SigmaFinite{} for every $i,j\in\NN.$ 
\end{Prop}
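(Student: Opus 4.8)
The plan is to reduce the computation of $H^j_{mS}H^i_I(S)$ to the local cohomology of a \SigmaFinite{} module, by means of the same Mayer--Vietoris type long exact sequence used in the proof of Proposition \ref{AssDimOne}. First I would use the hypothesis $\dim R/(I\cap R)\leq 1$ to choose an element $f\in m$ with $\sqrt{(I,f)S}\supseteq mS$. If $\dim R/(I\cap R)=0$ one may take $f=0$, so that $\sqrt{IS}\supseteq mS$ already; then $H^i_I(S)$ is itself \SigmaFinite{} by Proposition \ref{SF I mS} and $H^j_{mS}H^i_I(S)$ is \SigmaFinite{} by Corollary \ref{SF-Loc Coh}. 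Thus only the case $\dim R/(I\cap R)=1$, where $f$ can and must be chosen in $m$, requires work; note that $f\in m\subseteq mS$ there, which will be decisive.

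Next I would record the long exact sequence of $D$-modules
$$
\cdots\to H^i_{(I,f)S}(S)\FDer{\alpha_i} H^i_I(S)\FDer{\beta_i} H^i_{IS_f}(S_f)\to\cdots,
$$
whose maps are $D$-linear, and extract from it the short exact sequence of $D$-modules
$$
0\to \Im(\alpha_i)\to H^i_I(S)\to \Im(\beta_i)\to 0,
$$
using $\Im(\alpha_i)=\ker(\beta_i)$. Each outer term is controlled. On one side, $\Im(\alpha_i)$ is a quotient of $H^i_{(I,f)S}(S)$; since $\sqrt{(I,f)S}\supseteq mS$, this module is \SigmaFinite{} by Proposition \ref{SF I mS}, and hence so is its quotient $\Im(\alpha_i)$ by Proposition \ref{SF-SES}. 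On the other side, $\Im(\beta_i)$ is a $D$-submodule of the $S_f$-module $H^i_{IS_f}(S_f)$, and is therefore itself an $S_f$-module.

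The key step, and the one I expect to be the main obstacle, is the vanishing $H^j_{mS}(\Im(\beta_i))=0$ for all $j$, carried out at the level of $D$-modules. This is exactly where $f\in mS$ is used. For $j=0$: if $x\in\Im(\beta_i)$ is annihilated by a power of $mS$, then $f^k x=0$ for some $k$; as $f$ acts invertibly on the $S_f$-module $\Im(\beta_i)$, this forces $x=0$, so $H^0_{mS}(\Im(\beta_i))=0$. For $j\geq 1$ one localizes at $f$: since $f\in mS$ is a unit in $S_f$, the ideal $(mS)S_f$ is the unit ideal, whence $H^j_{mS}(\Im(\beta_i))_f\cong H^j_{S_f}\bigl((\Im(\beta_i))_f\bigr)=0$; as $H^j_{mS}(\Im(\beta_i))$ is again an $S_f$-module (being computed by a \v{C}ech complex whose terms are localizations of $\Im(\beta_i)$), it must vanish. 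Feeding this vanishing into the long exact sequence of $H^\bullet_{mS}$ applied to the short exact sequence above yields $D$-module isomorphisms $H^j_{mS}(H^i_I(S))\cong H^j_{mS}(\Im(\alpha_i))$ for every $j$.

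Finally, since $\Im(\alpha_i)$ is \SigmaFinite{,} Corollary \ref{SF-Loc Coh} shows that $H^j_{mS}(\Im(\alpha_i))$ is \SigmaFinite{,} and therefore so is $H^j_{mS}H^i_I(S)$. The characteristic $0$ hypothesis is available throughout: by Remark \ref{RemQuotientDMod}(ii) it guarantees that $D$ is Noetherian, which is what underlies the \SigmaFinite{} formalism invoked above (in particular Proposition \ref{SF-SESCharZero} and the finite-generation arguments behind Lemma \ref{FinLenFinGen}). The delicate point remains the verification that the connecting maps $\alpha_i,\beta_i$ are $D$-linear and that the vanishing of $H^j_{mS}(\Im(\beta_i))$ holds in the category of $D$-modules, so that the entire argument stays within that category and the resulting isomorphisms respect the $D$-module structure.
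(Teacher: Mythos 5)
Your reduction for $j=0$ is sound (it is essentially the paper's Proposition \ref{AssDimOne}), but the key vanishing claim for $j\geq 1$ is false, and with it the asserted isomorphism $H^j_{mS}H^i_I(S)\cong H^j_{mS}(\Im(\alpha_i))$. The error is the assertion that $\Im(\beta_i)$, being a $D$-submodule of the $S_f$-module $H^i_I(S_f)$, ``is therefore itself an $S_f$-module'': an $S$-submodule (or $D$-submodule) of an $S_f$-module need not be closed under multiplication by $1/f$. Concretely, take $R=K[[t]]$, $S=R[x]$, $I=(x)S$, $f=t$. Then $H^1_I(S)=S_x/S\cong x^{-1}R[x^{-1}]$ and $H^1_I(S_t)\cong x^{-1}R_t[x^{-1}]$, and $\beta_1$ is the natural localization map, so $\Im(\beta_1)\cong\bigoplus_{k\geq 1}R\,x^{-k}$, which is not an $S_t$-module; moreover
$$
H^1_{mS}(\Im(\beta_1))\cong \bigoplus_{k\geq 1}\bigl(R_t/R\bigr)\,x^{-k}\neq 0 .
$$
Your localization step only shows that $\bigl(H^j_{mS}(\Im(\beta_i))\bigr)_f=0$, i.e.\ that this module is $f$-torsion (as the example confirms), not that it vanishes; the parenthetical justification that it is an $S_f$-module is circular, since it presupposes exactly the false claim about $\Im(\beta_i)$. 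Consequently, for $j\geq 1$ your short exact sequence only exhibits $H^j_{mS}H^i_I(S)$ as an extension of a submodule of $H^j_{mS}(\Im(\beta_i))$ by a quotient of $H^j_{mS}(\Im(\alpha_i))$, and the right-hand term genuinely contributes.

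A telling symptom is that your argument nowhere uses characteristic $0$ in an essential way (Propositions \ref{SF I mS}, \ref{SF-SES} and Corollary \ref{SF-Loc Coh} are characteristic-free), so if it were correct it would prove the statement in every characteristic. The paper needs characteristic $0$ precisely because $H^j_{mS}(\Im(\beta_i))$ does not vanish and must instead be shown to be \SigmaFinite{:} writing $W_i=\Im(\beta_i)$ and $M_{i+1}=\Ker\bigl(H^{i+1}_{(I,f)S}(S)\to H^{i+1}_I(S)\bigr)$, the additional short exact sequence $0\to W_i\to H^i_I(S_f)\to M_{i+1}\to 0$ together with the vanishing $H^j_{mS}\bigl(H^i_I(S_f)\bigr)=0$ (valid because $H^i_I(S_f)$ itself \emph{is} an $S_f$-module) gives $H^j_{mS}(W_i)\cong H^{j-1}_{mS}(M_{i+1})$, which is \SigmaFinite{} by Propositions \ref{SF I mS}, \ref{SF-SES} and Corollary \ref{SF-Loc Coh}, but typically nonzero. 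Then $H^j_{mS}H^i_I(S)$ is an extension of two \SigmaFinite{} modules, and closing the class under extensions is exactly what Proposition \ref{SF-SESCharZero} --- hence the Noetherianity of $D$ in characteristic $0$ --- is needed for. Your proof is repaired by replacing the false vanishing with this isomorphism and invoking Proposition \ref{SF-SESCharZero} at the last step.
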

\begin{proof}
Since $\dim R/(I\cap R)\leq 1,$ there exists $g\in R,$ such that $mS\subset \sqrt{(I,g)S}.$ 
We have the long exact sequence 
$$
\ldots \to H^i_{(I,g)S}(S) \to H^i_I(S)\to H^i_I(S_g)\to \ldots.
$$
Let $M_i= \Ker(H^i_{(I,g)S}(S) \to H^i_I(S)),$
$N_i=\Im(H^i_{(I,g)S}(S) \to H^i_I(S))$ and
$W_i=\Im(H^i_I(S)\to H^i_I(S_g)).$
We have the following short exact sequences:
$$
0\to M_i\to H^i_{(I,g)S}(S) \to N_i\to 0,
$$
$$
0\to N_i\to H^i_{I}(S) \to W_i\to 0
$$
and
$$
0\to W_i\to H^i_{I}(S_g) \to M_{i+1}\to 0.
$$
Since $mS\subset \sqrt{(I,g)S},$ $H^i_{(I,g)S}(S)$ is \SigmaFinite{} by Proposition \ref{SF I mS}. Then,
$M_i$ and $N_i$ is \SigmaFinite{} for every $i\in\NN$ by Proposition \ref{SF-SES}.
By the long exact sequences
$$
\ldots\to H^j_{mS} (M_i)\to H^j_{mS}H^i_{(I,g)S}(S) \to H^j_{mS} (N_i)\to \ldots,
$$
$$
\ldots\to H^j_{mS} (N_i)\to H^j_{mS} H^i_{I}(S) \to H^j_{mS} (W_i))\to \ldots
$$
and
$$
\ldots\to H^j_{mS} (W_i)\to H^j_{mS}H^i_{I}(S_g) \to H^j_{mS} (M_{i+1})\to \ldots,
$$
$H^j_{mS} (M_i),$ $H^j_{mS}H^i_{(I,g)S}(S)$ and $H^j_{mS} (M_i)$ are \SigmaFinite{} for every $i,j\in\NN.$
Since $H^j_{mS}H^i_{I}(S_g)=0,$ $H^j_{mS} (W_i)=H^j_{mS} (M_{i+1}).$
Then, $H^j_{mS} (W_i)$ is \SigmaFinite{,} and so $H^j_{mS} H^i_{I}(S) $ is \SigmaFinite{} by \ref{SF-SESCharZero}.
\end{proof}


\section{More examples of \SigmaFinite{} $D$-modules}\label{QSF}
In the previous section we gave a positive answer for specific cases for Question \ref{Q2}. Our method consisted in proving that
$H^j_{mS}H^i_I(S)$ is \SigmaFinite{} and then applying Remark \ref{SF-AssPrimes}. This motivates the following question:

\begin{Q}\label{QSigma}
Is $H^j_{mS}H^i_I(S)$ \SigmaFinite{} for every ideal $I\subset S$ and $i,j\in\NN?$
\end{Q}
In this section, we provide positive examples for Question \ref{QSigma}. 
\begin{Prop}\label{SF-CD}
Let $(R,m,K)$ be any local ring. Let $S$ denote either $R[x_1,\ldots, x_n]$ or $R[[x_1,\ldots, x_n]]$. 
Let $I\subset S$ be an ideal such that $\Depth_S I =\cd_S{I}.$
Then, $H^i_{mS} H^{\Depth_S I}_I(S)$ is  \SigmaFinite{} for every $i\in \NN.$
\end{Prop}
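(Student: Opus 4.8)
The plan is to exploit the hypothesis $\Depth_S I=\cd_S I$, which pins the local cohomology of $S$ with support in $I$ to a single cohomological degree, and then to compare the iterated module with a single local cohomology module whose support contains $mS$, where Proposition \ref{SF I mS} applies directly. Write $c=\Depth_S I=\cd_S I$. Since $\Depth_S I$ is the least integer $q$ with $H^q_I(S)\neq 0$ while $\cd_S I$ is the greatest such integer, the equality forces $H^q_I(S)=0$ for every $q\neq c$; the only possibly nonzero module is $H^c_I(S)$.

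Next I would build a composite-functor spectral sequence for $\Gamma_{mS}\circ\Gamma_I$, exactly as in the proof of Proposition \ref{SF I mS}. Let $\underline{f}$ generate $mS$ (which is finitely generated, since $R$ is Noetherian local) and let $\underline{g}$ generate $I$. The total complex of the double complex $\Cech(\underline{f};S)\otimes_S\Cech(\underline{g};S)$ computes $H^{\bullet}_{(I+mS)}(S)$, and the spectral sequence obtained by taking cohomology in the $\underline{g}$-direction first has
$$
E^{p,q}_2=H^p_{mS}\bigl(H^q_I(S)\bigr)\Rightarrow H^{p+q}_{(I+mS)}(S).
$$
Because every term of this double complex is a localization of $S$ and every differential and localization map is a morphism of $D(S,R)$-modules, all pages, differentials and the limiting filtration live in the category of $D(S,R)$-modules.

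Now I would invoke the vanishing from the first step: since $E^{p,q}_2=0$ for $q\neq c$, the spectral sequence has a single nonzero row $q=c$, so every higher differential has source or target in a zero row and hence vanishes. Thus it degenerates at $E_2$, and for each $p$ the abutment has a trivial filtration, giving an isomorphism of $D(S,R)$-modules
$$
H^p_{mS}\bigl(H^c_I(S)\bigr)=E^{p,c}_2=E^{p,c}_\infty\cong H^{p+c}_{(I+mS)}(S).
$$
Since $I+mS\supset mS$, Proposition \ref{SF I mS} shows that $H^{p+c}_{(I+mS)}(S)$ is \SigmaFinite{} for every $p$. Taking $p=i$ yields that $H^i_{mS}H^{\Depth_S I}_I(S)$ is \SigmaFinite{,} which is the assertion. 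I note that even if one only tracks that $H^p_{mS}(H^c_I(S))$ is a $D(S,R)$-module subquotient of $H^{p+c}_{(I+mS)}(S)$ through the filtration, Proposition \ref{SF-SES} still gives the conclusion, since subobjects and quotients of \SigmaFinite{} modules are \SigmaFinite{.}

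The main obstacle I anticipate is bookkeeping rather than conceptual: one must be sure that the composite-functor spectral sequence is genuinely a spectral sequence of $D(S,R)$-modules, so that the abutment isomorphism is $D$-linear and the \SigmaFinite{} property transfers. Assembling it from the \v{C}ech double complex as above makes this transparent, since the \v{C}ech differentials and the maps $S\to S_f$ are morphisms of $D(S,R)$-modules (as already used in Proposition \ref{SF I mS}); the only point needing a line of care is that the filtration on the abutment and the edge isomorphism respect the $D$-module structure, which holds because the entire construction is built from $D$-linear maps.
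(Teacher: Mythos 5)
Your proposal is correct and follows essentially the same route as the paper: the composite-functor spectral sequence $E^{p,q}_2=H^p_{mS}(H^q_I(S))\Rightarrow H^{p+q}_{(I,m)S}(S)$ degenerates at $E_2$ because $\Depth_S I=\cd_S I$ leaves a single nonzero row, identifying $H^p_{mS}(H^c_I(S))$ with $H^{p+c}_{(I,m)S}(S)$, which is \SigmaFinite{} by Proposition \ref{SF I mS}. Your write-up merely makes explicit two points the paper leaves implicit (the vanishing of $H^q_I(S)$ for $q\neq c$ and the $D(S,R)$-linearity of the spectral sequence built from the \v{C}ech double complex), and your fallback via Proposition \ref{SF-SES} is a valid alternative safeguard.
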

\begin{proof}
We have that the spectral sequence 
$$
E^{p,q}_{2}=H^{p}_{mS} H^q_I (S) \Longrightarrow E^{p,q}_{\infty} =H^{p+q}_{(I,m)S}(S),
$$
converges at the second spot, because  $\Depth_S I=\cd_S{I}.$
Hence,
$$
H^{p}_{mS} H^q_I (S) =H^{p+q}_{(I,m)S}(S)
$$
and the result follows by Proposition \ref{SF I mS}.
\end{proof}

\begin{Prop}\label{SF-Ext}
Let $(R,m,K)$ be any local ring. Let $S$ denote either $R[x_1,\ldots, x_n]$ or $R[[x_1,\ldots, x_n]]$. 
Let $I\subset S$ be an ideal such that $\Ext^i_S(S/mS, H^j_I(S))$ is a $D$-module in $C(R,S)$ for every 
$i\in\NN.$
Then, $H^i_{mS} H^j_I(S)$ is a \SigmaFinite{}  $D(S,R)$-module for every $i,j\in \NN.$
\end{Prop}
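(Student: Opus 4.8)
The plan is to compute $H^i_{mS}H^j_I(S)$ as a direct limit of the Ext modules appearing in the hypothesis and to invoke Proposition \ref{SF-DirectSystem}. Write $N=H^j_I(S)$, which is an object of $C(S,R)$. Since $mS$, and each power $m^tS$, is a $D$-stable ideal of $S$, the modules $\Ext^i_S(S/m^tS,N)$ carry $D(S,R)$-module structures, functorial in $t$, and
$$
H^i_{mS}(N)=\lim\limits_{\to t}\Ext^i_S(S/m^tS,N)
$$
as $D(S,R)$-modules. Thus it suffices to show that $\{\Ext^i_S(S/m^tS,N)\}_t$ is a direct system of \SigmaFinite{} (indeed finite length) $D$-modules whose composition factors all lie in one fixed finite subset of $C(S/mS,R/m)$; Proposition \ref{SF-DirectSystem} then yields that the limit $H^i_{mS}(N)$ is \SigmaFinite{.}

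First I would treat the base case $t=1$. The module $\Ext^i_S(S/mS,N)$ is annihilated by $mS$, so by Remark \ref{RemQuotientDMod}(i) its $D(S,R)$-structure factors through $\rho$ and it is a $D(S/mS,R/m)$-module. By hypothesis it is an object of $C(S,R)$, and being killed by $mS$ it is in fact an object of $C(S/mS,R/m)$. Since $S/mS$ is a polynomial or power series ring over the field $K$, Lyubeznik's finiteness theorem \cite{LyuFreeChar} shows that it has finite length as a $D(S/mS,R/m)$-module with all composition factors in $C(S/mS,R/m)$. Set $\mathcal{F}:=\Comp{\Ext^i_S(S/mS,N)}$, a finite set contained in $C(S/mS,R/m)$.

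Next I would induct on $t$ to prove that $\Ext^i_S(S/m^tS,N)$ has finite length with $\Comp{\Ext^i_S(S/m^tS,N)}\subset\mathcal{F}$. Because $S$ is flat over $R$ and $m^k/m^{k+1}$ is a finite-dimensional $K$-vector space, the isomorphism $m^kS/m^{k+1}S\cong (m^k/m^{k+1})\otimes_R S\cong (S/mS)^{r_k}$ gives a short exact sequence of $D$-modules
$$
0\to (S/mS)^{r_k}\to S/m^{k+1}S\to S/m^kS\to 0,\qquad r_k=\dim_K m^k/m^{k+1}.
$$
Applying $\Ext^i_S(-,N)$ produces a long exact sequence of $D$-modules
$$
\cdots\to \Ext^i_S(S/m^kS,N)\to \Ext^i_S(S/m^{k+1}S,N)\to \Ext^i_S(S/mS,N)^{r_k}\to\cdots,
$$
so $\Ext^i_S(S/m^{k+1}S,N)$ is an extension of a $D$-submodule of $\Ext^i_S(S/mS,N)^{r_k}$ by a quotient of $\Ext^i_S(S/m^kS,N)$. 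By Remark \ref{RemFinLen} and the inductive hypothesis it has finite length and $\Comp{\Ext^i_S(S/m^{k+1}S,N)}\subset\mathcal{F}$. Each $\Ext^i_S(S/m^tS,N)$ is then a finite length $D$-module supported at $mS$ with composition factors in $C(S/mS,R/m)$, hence \SigmaFinite{,} and $\bigcup_t\Comp{\Ext^i_S(S/m^tS,N)}\subset\mathcal{F}$ is finite. Proposition \ref{SF-DirectSystem} completes the argument.

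The hard part will be the structural input rather than the bookkeeping: justifying that the Ext modules $\Ext^i_S(S/m^tS,N)$ are $D(S,R)$-modules with $D$-linear transition and connecting maps, so that the long exact sequences above are sequences of $D$-modules and the limit is computed in the category of $D$-modules, and that an object of $C(S,R)$ annihilated by $mS$ is an object of $C(S/mS,R/m)$ so that Lyubeznik's finiteness theorem applies. The remaining steps are formal consequences of Remark \ref{RemFinLen}, Proposition \ref{SF-DirectSystem}, and the flatness of $S$ over $R$.
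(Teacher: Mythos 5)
Your overall route coincides with the paper's: write $H^i_{mS}H^j_I(S)=\lim\limits_{\to t}\Ext^i_S(S/m^tS,H^j_I(S))$, show each term is a finite length $D$-module whose composition factors lie in one fixed finite subset of $C(S/mS,K)$, and conclude with Proposition \ref{SF-DirectSystem}. The paper carries out the middle step by induction on the length of an arbitrary finite length $R$-module $N$ applied to $\Ext^i_S(N\otimes_R S,H^j_I(S))$, ``analogous to Lemma \ref{Comp}''; your induction on $t$ using the filtration with graded pieces $(m^k/m^{k+1})\otimes_R S\cong (S/mS)^{r_k}$ is the same computation, so the bookkeeping part of your argument matches the paper.

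The gap is in your base case, and it is not merely a detail to be checked: the bridging claim you flag as ``the hard part'' --- that an object of $C(S,R)$ annihilated by $mS$ is an object of $C(S/mS,R/m)$ --- is false, and the Remark immediately following this proposition in the paper is precisely a counterexample. For Hartshorne's ring $R=K[[s,t,u,v]]/(us+vt)$ with $I=(s,t)$, the module $\Ext^0_S(S/mS,H^2_I(S))=\Hom_S(S/mS,H^2_I(S))$ is an infinite direct sum $\oplus\, S/mS$. Every operator in $D(S,R)$ commutes with multiplication by elements of $R$, so the $mS$-torsion submodule of any $D(S,R)$-module is $D$-stable; hence $\Hom_S(S/mS,H^2_I(S))$ is a $D(S,R)$-submodule of $H^2_I(S)\in C(S,R)$ and therefore \emph{does} lie in $C(S,R)$, by closure under subobjects. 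Yet it has infinite length as a $D(S/mS,K)$-module, so it cannot lie in $C(S/mS,K)$: every object of $C(S/mS,K)$ has finite length by Lyubeznik's theorem \cite{LyuFreeChar} --- the very theorem your base case invokes. Note also that under your reading the hypothesis is essentially vacuous (for $i=0$ the Ext module is always a $D(S,R)$-submodule of $H^j_I(S)$, and for $i>0$ computing $\Ext$ from an $R$-free resolution of $R/m$ tensored with the flat extension $S$ exhibits it as a $D(S,R)$-subquotient of a finite direct sum of copies of $H^j_I(S)$), so your proposition would answer Question \ref{QSigma} affirmatively in full generality, which the paper poses as open. The statement's ``$C(R,S)$'' is garbled, but the paper's own proof and its Remark make the intended hypothesis clear: $\Ext^i_S(S/mS,H^j_I(S))$, which is a $D(S/mS,K)$-module by Remark \ref{RemQuotientDMod}, is assumed to be an object of $C(S/mS,K)$. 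With that reading your base case is immediate from Lyubeznik's finiteness theorem, and the rest of your argument goes through unchanged and agrees with the paper.
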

\begin{proof}
We claim that $\Ext^i_S(N\otimes_R S, H^j_I(S))$ is a $D(S/mS,K)$-module in $C(S/mS,K)$ for every 
$i\in\NN$ and every finite length $R$-module $N$. Moreover, $\Comp{\Ext^i_S(N\otimes_R S, H^j_I(S))}\subset \bigcup_i \Comp{\Ext^i_S(K\otimes_R S, H^j_I(S))}.$
The proof of our claim is analogous to Lemma \ref{Comp}.

The direct system $\Ext^i(S/m^\ell S,H^j_I(S))\to \Ext^i(S/m^{\ell+1} S,H^j_I(S))$ satisfies the hypotheses of Proposition \ref{SF-DirectSystem}. Hence,
$$H^i_{mS}H^j_I(S)=\lim\limits_{\to \ell} \Ext^i(S/m^\ell S,H^j_I(S))$$
is a \SigmaFinite{} $D(S,R)$-module.
\end{proof}

\begin{Rem}
The condition that
$\Ext^i_S(S/mS, H^j_I(S))$ be a $D(S/mS,K)$-module in $C(S/mS,K)$ for every 
$i\in\NN$ is not necessary. 

Let $R=K[[s,t,u,w]]/(us+vt)$, where $K$ is a field. This is the ring given by Hartshorne's example \cite{Ha}. 
Let $I=(s,t)A$. Hartshorne showed that
$\dim_K \Hom_A(K,H^2_I(A)) $ is not finite.
Let $S$ be either $R[x_1,\ldots,x_n]$ or $R[[x_1,\ldots,x_n]]$. 
Therefore,
$$\Ext^0_S(S/mS,H^2_I(S))=\Hom_S(S/mS,H^2_I(S))$$
$$=\Hom_R(K,H^2_I(R))\otimes_R S=\oplus S/mS,$$
where the direct sum is infinite. Then, $\Ext^0_S(S/mS,H^2_I(S))$
does not belong to $C(S,R).$ 

On the other hand, $H^0_mH^2_I(S)$ is a direct limit of finite direct sums of $S/mS.$ This direct limit satisfies the hypotheses of 
Proposition \ref{SF-DirectSystem}. Therefore, $H^0_mH^2_I(S)$ is a \SigmaFinite{} $D(S,R)$-module.
\end{Rem}

\begin{Prop}
Let $(R,m,K)$ be any local ring and let $S$ denote $R[x_1,\ldots, x_n].$ 
Let $I\subset S$ be an ideal.
Then, $H^i_{mS} H^0_I(S)$ is  \SigmaFinite{} for every $i\in \NN.$
In addition,
if $\cd_S{I}\leq 1,$ then
$H^i_{mS} H^j_I(S)$ is  \SigmaFinite{} for every $i,j\in \NN.$
\end{Prop}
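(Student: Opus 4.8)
The unifying idea is that both assertions follow from Proposition \ref{SF-Ext} once its hypothesis is checked: for the relevant $j$ the modules $\Ext^i_S(S/mS,H^j_I(S))$ lie in $C(S/mS,K)$ for every $i$. Since over the field $K$ the objects of $C(S/mS,K)$ have finite length, this amounts to showing these $\Ext$ modules are finite-length $D(S/mS,K)$-modules with composition factors of the expected type. For the first assertion I would exploit that $H^0_I(S)=\Gamma_I(S)$ is \emph{finitely generated over $S$}, being a submodule of the Noetherian ring $S$. Resolving $K$ over $R$ by finitely generated free modules and applying $S\otimes_R-$ (which is flat) produces a resolution of $S/mS$ by finitely generated free $S$-modules whose differentials are matrices over $R$; since multiplication by an element of $R$ is a $D(S,R)$-linear operator, $\Ext^i_S(S/mS,\Gamma_I(S))$ is computed as a subquotient of finitely many copies of $\Gamma_I(S)$, is annihilated by $mS$, and is coherent over $S/mS=K[x_1,\ldots,x_n]$. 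A coherent $D(S/mS,K)$-module has finite length, and by the same reasoning as in Lemma \ref{Comp} its composition factors lie in $C(S/mS,K)$; Proposition \ref{SF-Ext} then gives that $H^i_{mS}H^0_I(S)$ is \SigmaFinite{.}

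For the second assertion, $\cd_S I\le 1$ forces $H^j_I(S)=0$ for $j\ge 2$, and $j=0$ is covered above, so only $j=1$ is at stake; note that $H^1_I(S)$ is now the \emph{top} local cohomology module. I would attack $H^i_{mS}H^1_I(S)$ through the Grothendieck spectral sequence $E_2^{p,q}=H^p_{mS}H^q_I(S)\Rightarrow H^{p+q}_{(I,m)S}(S)$, which has only the two rows $q=0,1$; its abutment is \SigmaFinite{} by Proposition \ref{SF I mS}, and the $q=0$ row is \SigmaFinite{} by the first assertion. Equivalently, after splitting off the torsion by $0\to\Gamma_I(S)\to S\to\overline S\to 0$ with $\overline S=S/\Gamma_I(S)$, one has $H^1_I(\overline S)=H^1_I(S)$ and $\Depth_I\overline S=\cd_I\overline S$ (both equal to $1$ when $H^1_I(S)\neq 0$), so the spectral sequence for $\overline S$ collapses to the single row $q=1$ and yields $H^i_{mS}H^1_I(S)=H^{i+1}_{(I,m)S}(\overline S)$. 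The defining sequence then places this module between $H^{i+1}_{(I,m)S}(S)$ and $H^{i+2}_{mS}(\Gamma_I(S))$ (using that $\Gamma_I(S)$ is $I$-torsion, so $H^\bullet_{(I,m)S}(\Gamma_I(S))=H^\bullet_{mS}(\Gamma_I(S))$), both of which are \SigmaFinite{.}

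The main obstacle is exactly this last assembly for $j=1$: it exhibits $H^i_{mS}H^1_I(S)$ as an \emph{extension} $0\to A\to H^i_{mS}H^1_I(S)\to B\to 0$ of two \SigmaFinite{} modules (a quotient and a submodule of \SigmaFinite{} modules). When $R$ contains $\QQ$ this is harmless, since $D$ is Noetherian and \SigmaFinite{ness} is closed under extensions (Proposition \ref{SF-SESCharZero}). In positive characteristic $D$ need not be Noetherian and one cannot close up under extensions, so the extension must be replaced by a genuine finiteness statement: one shows directly that $\Ext^i_S(S/mS,H^1_I(S))\in C(S/mS,K)$ — here the subquotient-of-copies argument no longer yields coherence, because $H^1_I(S)$ is not finitely generated, and one must instead use that the top module $H^1_I(S)$ is $I$-cofinite (a consequence of $\cd_S I\le 1$) to force these $\Ext$ modules to be finite-length $D$-modules with composition factors drawn from a fixed finite set, and then invoke Proposition \ref{SF-Ext} through the direct system of Proposition \ref{SF-DirectSystem}. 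Carrying out this cofiniteness/coherence analysis — where the hypotheses that $S$ is a polynomial ring and that $\cd_S I\le 1$ are genuinely used — is the crux of the argument.
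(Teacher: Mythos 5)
Your reduction steps are individually sensible, but both halves of the argument have genuine gaps, and both trace back to the same missing idea, which is the actual crux of the paper's proof: \emph{every $D(S,R)$-submodule of $S=R[x_1,\ldots,x_n]$ is of the form $JS$ for some ideal $J\subset R$}, proved by applying the operators $\frac{1}{t!}\frac{\partial^t}{\partial x_i^t}$ to an element $f=\sum_\alpha c_\alpha x^\alpha$ of the submodule to extract its coefficients $c_\alpha$ (this is exactly where the hypothesis that $S$ is a polynomial rather than a power series ring is used). In particular $H^0_I(S)=JS$. For your first assertion, you instead assert that $\Ext^i_S(S/mS,H^0_I(S))$, being a $D(S/mS,K)$-module coherent over $S/mS$, ``has finite length, and by the same reasoning as in Lemma \ref{Comp} its composition factors lie in $C(S/mS,K)$.'' Lemma \ref{Comp} is an induction on the length of an $R$-module and proves nothing of this sort, and the general claim is false: in characteristic $0$ the free rank-one module $(S/mS)\cdot e$ with $\frac{\partial}{\partial x_1}\cdot e=e$ and $\frac{\partial}{\partial x_i}\cdot e=0$ for $i\geq 2$ (the exponential module $D/D(\partial_{x_1}-1,\partial_{x_2},\ldots,\partial_{x_n})$) is coherent over $S/mS$ and simple as a $D(S/mS,K)$-module, yet it does not belong to $C(S/mS,K)$, since every finite-length object of $C(S/mS,K)$ has its composition factors among those of the modules $(S/mS)_f$, and the exponential module is not such a factor (it is irregular at infinity). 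So coherence alone does not place you in $C(S/mS,K)$. With the identity $H^0_I(S)=JS$ no such argument is needed: one gets $\Ext^i_S(S/mS,H^0_I(S))=\Ext^i_S(R/m\otimes_R S,\,J\otimes_R S)=\Ext^i_R(K,J)\otimes_R S\cong (S/mS)^{\oplus\mu}$ with $\mu=\dim_K\Ext^i_R(K,J)$ finite, which is visibly in $C(S/mS,K)$, and Proposition \ref{SF-Ext} applies.

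For the second assertion your spectral sequence collapse is correct, but, as you yourself note, it only exhibits $H^i_{mS}H^1_I(S)\cong H^{i+1}_{(I,m)S}(\overline{S})$ as an \emph{extension} of a submodule of $H^{i+2}_{mS}(H^0_I(S))$ by a quotient of $H^{i+1}_{(I,m)S}(S)$, and closure of \SigmaFinite{ness} under extensions (Proposition \ref{SF-SESCharZero}) is only available when $R\supset\QQ$; in characteristic $p>0$ or mixed characteristic your argument stops, and the proposed cofiniteness repair is left entirely unexecuted, whereas the Proposition carries no characteristic hypothesis. The paper avoids the extension altogether, again via $H^0_I(S)=JS$: the quotient $\overline{S}=S/JS=(R/J)[x_1,\ldots,x_n]$ is itself a polynomial ring over the \emph{local ring} $R/J$, one has $H^1_I(S)\cong H^1_{I\overline{S}}(\overline{S})$ by \cite[Corollary $2.1.7$]{BroSharp}, and $\Depth_{\overline{S}}(I\overline{S})=\cd_{\overline{S}}(I\overline{S})=1$. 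Since Propositions \ref{SF I mS} and \ref{SF-CD} hold over an arbitrary local base, applying them with base ring $R/J$ identifies $H^i_{m\overline{S}}H^1_{I\overline{S}}(\overline{S})$ with $H^{i+1}_{(I,m)\overline{S}}(\overline{S})$, which is \SigmaFinite{} directly, in every characteristic, with no extension problem. The step you missed is precisely this change of base ring from $R$ to $R/J$, and it is unavailable without first knowing that $H^0_I(S)$ is extended from an ideal of $R$.
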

\begin{proof}
We claim that there exists an ideal $J\subset R$ such that
$H^0_I(S)=JS.$
We have that $H^0_I(S)$ is a $D(S,R)$-module. For every $f=\sum_{\alpha} c_\alpha x^\alpha\in H^0_I(S)$ and $\partial \in D(S,R),$
$\partial f\in H^0_I(S).$ Therefore, $c_{\alpha}\in H^0_I(S).$
and $H^0_I(S)=JS,$ where $J=\{c_{\alpha} \mid \sum_{\alpha} c_\alpha x^\alpha\in  H^0_I(S)\}.$

We have that 
\begin{align*}
\Ext^i_S(S/mS, H^0_I(S))&=\Ext^i_S(R/mR\otimes_R S, J\otimes_R S)\\
&=\Ext^i_R(K, J)\otimes_S S\\
&=\oplus^\mu S/mS, \hbox{ where } \mu=\dim_{K} \Ext^i_R(K,J),
\end{align*}
and it is a $D(S,R)$-module in $C(S,R)$ for every 
$i\in\NN.$ The first claim follows from Proposition \ref{SF-Ext}.

We have that $H^1_I(S)=H^1_{I(S/J)}(S/J)$  \cite[Corollary $2.1.7$]{BroSharp}.
In addition, $S/JS=(R/J)[x_1,\ldots,x_n]$ and 
$$\Depth_{I(S/JS)}=\cd_{I(S/JS)}(S/JS)=1.$$ The second claim follows from Proposition \ref{SF-CD}.
\end{proof}

\section{Reduction to power series rings}\label{SecQ}

\begin{Diss}\label{Discussion}
Suppose that $(R,m,K)$ and  $(S,\eta,L)$ are complete local rings  and that  $\varphi:R\to S$ is a flat extension of local rings  with regular closed fiber.
Assume that $\widehat{\varphi}$  maps a coefficient field of $R$ to a coefficient field of $S.$ 
We pick such coefficient fields, and then $\varphi(K)\subset L$.
Thus, $R=K[[x_1,\ldots,x_n]]/I$ for some ideal $I\subset K[[x_1,\ldots,x_n]]$.  
Let  $A=L\ctp_K R=L[[x_1,\ldots,x_n]]/I L[[x_1,\ldots,x_n]].$  
We note that $A$ is a  flat local extension of $R,$ such that $mA$ is the maximal ideal of $A$.
Let $\theta:A\to S$ be the morphism induced by $\varphi$ and our choice of coefficient fields.

We claim that $S$ is a flat $A$-algebra.
Let $F_*$ be a free resolution of $R/mR.$ Then, $A\otimes_R F_*$ is a free resolution for $A/mA.$ We have that
$$
\Tor^{A}_1(S,A/mA)=H_1(S\otimes_A A\otimes_R F_*)
$$
$$=H_1(S\otimes_R F_*)=\Tor^{R}_1(S,R/mR)=0
$$ 
because $S$ is a flat extension.
Since $mA$ is the maximal ideal of $A$, we have that $S$ is a flat $A$-algebra by  
the local criterion of flatness \cite[Theorem $6.8$]{Eisenbud}.

Let $d=\dim(S/mS)$ and $z_1,\ldots,z_d\in S$ be  preimages of a regular system of parameters for $S/mS$. Let $\varphi:A[[y_1,\ldots,y_d]]\to S$ 
be the morphism given by sending $A$ to 
$S$ via $\theta$ and  $y_i$ to  $z_i.$ Since $$(mA+(z_1,\ldots,z_d)A)S=\eta$$ and the morphism induced by $\varphi$ in the  quotient fields of 
$A$ and $S$ is an isomorphism. Hence, $\varphi$ is an isomorphism. 
\end{Diss}
\begin{Prop}\label{Questions}
Questions \ref{Q1} and \ref{Q2} are equivalent when we restrict them to a local extensions, such that the induced morphism in the 
completions maps a coefficient field of the domain to a coefficient field of the target. 
\end{Prop}
\begin{proof}
Let $\varphi:(R,m,K)\to (S,\eta,L)$ be a flat extension of local rings with regular closed fiber.
Suppose that  $\widehat{\varphi}:\widehat{R}\to \widehat{S},$ the induced morphism in the 
completions, maps a coefficient field of the $\widehat{R}$ to a coefficient field of $\widehat{S}$. 
We have that
$\Ass_R H^0_{mR}H^i_I(S )$ is finite if and only if 
$\Ass_R H^0_{m\widehat{R}}H^i_I(\widehat{S})$ is finite.
Let $A$ be as in the previous discussion and $d=\dim(S/mS).$
The result follows, because $\widehat{S}=A[[y_1,\ldots,y_d]]$ and $mS=(mA)S.$
\end{proof}

\begin{Rem}
In the previous proposition, 
the hypothesis that $ \widehat{\varphi}$ maps a coefficient field of  $\widehat{R}$ to a coefficient field of $\widehat{S}$ is satisfied when
$L$ is a separable extension of $K$   \cite[Theorem $28.3$]{Mat}.  
\end{Rem}

\begin{proof}[Proof of Theorem \ref{MainTwo}]
By Discussion \ref{Discussion}, we may assume that $R$ is complete and $S$ is a power series ring over $R$.
The rest is a consequence of Proposition \ref{AssDimOne}.
\end{proof}

\section*{Acknowledgments}
I would like to thank Mel Hochster for sharing  Questions \ref{Q1} and \ref{Q2} with me and for his invaluable comments and suggestions.  
I also wish to thank Emily Witt and Wenliang Zhang for useful mathematical conversations related to this work.
Thanks are also due to the National Council of Science and Technology of Mexico by its support through grant $210916.$

{\sc Department of Mathematics, University of Michigan, Ann Arbor, MI $48109$--$1043$, USA.}\\
{\it Email address:}  \texttt{luisnub@umich.edu}
\end{document}